\newtheorem{theorem}{Theorem}[section]
\newtheorem{corollary}[theorem]{Corollary}
\newtheorem{lemma}[theorem]{Lemma}
\newtheorem{proposition}[theorem]{Proposition}
\theoremstyle{definition}
\theoremstyle{remark}
\newtheorem{remark}[theorem]{Remark}
\numberwithin{equation}{section}
\newcommand{\eps}{\varepsilon}
\newcommand{\calA}{\mathcal{A}}
\newcommand{\calG}{\mathcal{G}}
\newcommand{\calD}{\mathcal{D}}
\newcommand{\calP}{\mathcal{P}}
\newcommand{\I}{\mathds{1}}
\newcommand{\Ee}{\operatorname{\mathds{E}}} % Expectation
\newcommand{\Pp}{\operatorname{\mathds{P}}} % Probability
\newcommand{\R}{\mathds{R}}
\newcommand{\N}{{\mathds{N}}}
\newcommand{\C}{{\mathds{C}}}
\newcommand{\Z}{{\mathds{Z}}}
\newcommand{\Rt}{{\R^2}}
\newcommand{\uni}{\mathds{S}}
\newcommand{\ol}{\overline}
\newcommand{\wh}{\widehat}
\newcommand{\wt}{\widetilde}
\newcommand{\unif}{\mathcal{U}}
\DeclareMathOperator{\Arg}{Arg}
\DeclareMathOperator{\supp}{supp}
\title[Stationary distributions for jump processes]{Stationary distributions for jump processes with memory}
\author{K.\ Burdzy,  T.\ Kulczycki \and R.L.\ Schilling}
\address{Krzysztof Burdzy, Department of Mathematics, Box 354350, University of Washington, Seattle, WA 98195, USA}
\email{burdzy@math.washington.edu}
\thanks{K. Burdzy was supported in part by NSF Grant DMS-0906743 and by grant N N201 397137, MNiSW, Poland.}
\address{Tadeusz Kulczycki, Institute of Mathematics, Polish Academy of Sciences, ul.\ Kopernika 18, 51-617 Wroc{\l}aw, Poland \newline Institute of Mathematics and Computer Science, Wroc{\l}aw University of Technology, Wybrzeze Wyspianskiego 27, 50-370 Wroc{\l}aw, Poland}
\email{t.kulczycki@impan.pl}
\thanks{T. Kulczycki was supported in part by grant N N201 373136, MNiSW, Poland.}
\address{Rene Schilling, Institut f\"ur Stochastik, TU Dresden, D-01062 Dresden, Germany.}
\email{rene.schilling@tu-dresden.de}
\thanks{ R.L.\ Schilling was supported in part by DFG grant Schi 419/5-1.}
\begin{document}

\begin{abstract}
We analyze a jump processes $Z$ with a jump measure determined by  a ``memory'' process $S$. The state space of $(Z,S)$ is the  Cartesian product of the unit circle and the real line. We prove that the stationary distribution of $(Z,S)$ is the product of the uniform probability measure and a Gaussian distribution.
\end{abstract}

\maketitle

\section{Introduction}\label{sec1}

We are going to find stationary distributions for processes with jumps influenced by ``memory''. This paper is a companion to \cite{BKS}. The introduction to that paper contains a review of various sources of inspiration for this project, related models and results.

We will analyze a pair of real-valued processes $(Y,S)$ such that $S$ is a ``memory'' in the sense that $dS_t  =   W(Y_t) \,dt$ where $W$  is a $C^3$ function. The process
$Y$ is a jump process ``mostly'' driven by a stable process but the process $S$ affects the rate of jumps of $Y$. We refer the reader to Section \ref{sec2} for a formal presentation of this model as it is too long for the introduction. The present article illustrates advantages of semi-discrete models introduced in \cite{BW}  since the form of the  stationary distribution for $(Y,S)$ was conjectured in \cite[Example 3.8]{BW}. We would not find it easy to conjecture the stationary distribution for this process in a direct way.

The main result of this paper, i.e.\  Theorem \ref{uniq2}, is concerned with the stationary distribution of a transformation of $(Y,S)$. In order to obtain non-trivial results, we ``wrap'' $Y$ on the unit circle, so that the state space for the transformed process is compact. In other words, we consider
$(Z_t,S_t) = (e^{iY_t},S_t)$. The stationary distribution for
$(Z_t,S_t)$ is the product of the uniform distribution on the circle and the normal distribution.

The Gaussian distribution of the ``memory'' process appeared in models discussed in \cite{BBCH,BKS}. In each of those papers, memory processes similar to $S$ effectively represented ``inert drift''. A heuristic argument given in the introduction to \cite{BKS} provides a justification for the Gaussian distribution, using the concepts of kinetic energy associated to drift and Gibbs measure. The conceptual novelty of the present paper is that the Gaussian distribution of $S$ in the stationary regime cannot be explained by kinetic energy because $S$ affects the jump distribution and not the drift of $Z$.

The product form of the stationary distribution for a two-component Markov process is obvious if the two components are independent Markov processes. The product form is far from obvious if the components are not independent but it does appear in a number of contexts, from queuing theory to mathematical physics. The paper \cite{BW} was an attempt to understand this phenomenon for a class of models. The unexpected appearance of the Gaussian distribution in some stationary measures was noticed in \cite{BW2004} before it was explored more deeply in \cite{BW,BBCH}.

We turn to the technical aspects of the paper.
The main effort is directed at determining the domain and a core of the generator of the process.
A part of the argument is based on an estimate of the smoothness of the stochastic flow of solutions to
\eqref{rep12}.

\subsection{Notation}\label{notation}
Since the paper uses a large amount of notation, we collect  some of the most frequently used symbols in the table below, for easy reference.

\bigskip\noindent
\begin{center}
\begin{footnotesize}
\begin{tabular}{l|p{10cm}}
$a\vee b$, $a\wedge b$
            & $\max(a,b)$, $\min(a,b)$;\\
\hline \\
$a_+$, $a_-$
            & $\max(a,0)$, $-\min(a,0)$;\\ \hline \\
$|x|_{\ell^1}$
            & $\displaystyle \sum_{j=1}^m |x_j|$ where $x=(x_1,\ldots, x_m)\in\R^m$;\\ \hline \\
$e_k$ & the $k$-th unit base vector in the usual orthonormal basis for $\R^n$;\\ \hline \\
$\calA_{\alpha}$
            & $\displaystyle \alpha \Gamma\left(\frac{1 + \alpha}2\right) \frac{2^{\alpha - 1}}{\sqrt\pi\,\Gamma\big(1-\frac\alpha2\big)}$, $\alpha\in (0,2)$;\\ \hline \\
$D^\alpha$
            & $\displaystyle \frac{\partial^{|\alpha|}}{\partial x_1^{\alpha_1} \cdots \partial x_d^{\alpha_d}}$, $\alpha=(\alpha_1,\ldots,\alpha_d)\in\N_0^d$;\\ \hline \\
$C^k$       & $k$-times continuously differentiable functions;\\ \hline \\
$C^k_b$, $C^k_c$, $C^k_0$
            & functions in $C^k$ which, together with all their derivatives up to order $k$, are ``bounded'', are ``compactly supported'', and ``vanish at infinity'',  respectively;\\ \hline \\

$C_*(\Rt)$  &  all bounded and uniformly continuous functions $f:\Rt \to \R$ such that
            $\supp(f) \subset \R \times [-N,N]$ for some $N>0$;\\ \hline \\

$C_*(\Rt)$  &  $C_*(\Rt) \cap C_b^2(\Rt)$; \\ \hline \\

$\uni$      &  $\{z\in \C \::\: |z|=1\}$ unit circle in $\C$.
\end{tabular}
\end{footnotesize}
\end{center}

\medskip\noindent
Constants $c$ without sub- or superscript are generic and may change their value from line to line.

\section{The construction of the process and its generator}\label{sec2}

Let $\uni = \{z \in \C \::\: |z| = 1\}$ be the unit circle in $\C$. Consider a $C^3$ function $V: \uni \to \R$ such that $\int_\uni
V(z) \, dz = 0$ and set $W(x) = V(e^{ix})$, $x \in \R$.
Assume that $V$ is not identically constant.
In this paper we will be interested in the Markov process
$(Y_t,S_t)$ with state space $\Rt$ and generator $\calG^{(Y,S)}$ of
the following form
\begin{equation}\label{GYS}
    \calG^{(Y,S)} f(y,s) = -(-\Delta_y)^{\alpha/2} f(y,s) + Rf(y,s) + W(y) f_s(y,s),
\end{equation}
with a domain that will be specified later.
Here, $(y,s) \in \Rt$, $\alpha \in (0,2)$ and
\begin{gather}\notag
    -(-\Delta_y)^{\alpha/2} f(y,s)
    = \calA_{\alpha} \lim_{\varepsilon \to 0^+} \int_{|y-x| > \varepsilon} \frac{f(x,s) - f(y,s)}{|y - x|^{1 + \alpha}} \, dx,\\
\label{defR1}
    Rf(y,s) = \int_{-\pi + y}^{\pi+y} \big(f(x,s) - f(y,s)\big) \big((W(y) - W(x))s\big)_{+} \, dx.
\end{gather}
Since $-(-\Delta)^{\alpha/2}$, $\alpha \in (0,2)$, is the generator of the symmetric $\alpha$-stable process on $\R$, we may think of the process $Y_t$ as the perturbed symmetric $\alpha$-stable process and $S_t$ as the memory which changes the jumping measure of the process $Y_t$.

The definition of $(Y,S)$ is informal. Below we will construct this process in a direct way and we will show that this process has the generator \eqref{GYS}; see Proposition \ref{genYS2}. Our construction is based on the so-called \emph{construction of Meyer}; see, e.g., \cite{M} or \cite[Section 3.1]{BGK}.

For any $(y,s) \in \Rt$ let
$$
    g(y,s,x) = ((W(y)-W(y+x))s)_{+}\, \I_{(-\pi,\pi)}(x), \quad x \in \R,
$$
and
$$
    \|g(y,s,\cdot)\|_1 = \int_{-\pi}^{\pi} ((W(y)-W(y + x))s)_{+} \, dx.
$$
Let  $\ol g(y,s,x) := g(y,s,x)/\|g(y,s,\,\cdot)\|_1$  if $\|g(y,s,\cdot)\|_1\ne 0$.
We let $\ol g(y,s,\,\cdot\,)$ be the delta function at 0 when
$\|g(y,s,\cdot)\|_1= 0$. If $\|g(y,s,\cdot)\|_1\ne 0$, we let
$F_{y,s} (\,\cdot\,)$ denote the cumulative distribution function of a random variable with density $\ol g(y,s,\,\cdot\,)$. If $\|g(y,s,\cdot)\|_1 = 0$, we let
$F_{y,s} (\,\cdot\,)$ denote the cumulative distribution function of a random variable that is identically equal to 0.
We have
\begin{align*}
    F_{y,s}^{-1}(v) = \inf\left\{x\in \R: \int_{-\infty}^x  \frac{g(y,s,z)}{\|g(y,s,\,\cdot,)\|_1}  dz \geq v\right\}
\end{align*}
so for any $v$, the function $(y,s)\to F_{y,s}^{-1}(v)$ is measurable.
If $\unif$ is a uniformly distributed random variable on $(0,1)$, then $F_{y,s}^{-1}(\unif)$ has the density $\ol g(y,s,\,\cdot\,)$. Let $(\unif_n)_{n\in\N}$ be countably many independent copies of  $\unif$ and set $\eta_n(y,s) = F_{y,s}^{-1}(\unif_n)$.

Let $X(t)$ be a symmetric $\alpha$-stable process on $\R$, $\alpha \in (0,2)$, starting from $0$ and $N(t)$ a Poisson process with intensity $1$. We assume that
$(\unif_n)_{n \in \N}$, $X(\cdot)$ and $N(\cdot)$ are independent.

Let $0 < \sigma_1 < \sigma_2 <  \ldots$ be
the times of jumps of $N(t)$.
Consider any $y,s \in \R$ and for $t\geq 0$ let
\begin{align*}
    Y^1_t &= y + X_t, \\
    S^1_t &= s + \int_0^t W(Y^1_r) \, dr,\\
   \wh \sigma_1(t) &= \int_{0}^t \|g(Y^1_r,S^1_r,\cdot)\|_1 \, dr, \\
    \tau_1 &= \inf_{t \ge 0} \{\wh\sigma_1(t) = \sigma_1\}, \qquad (\inf\emptyset = \infty).
\end{align*}

Now we proceed recursively. If $Y^j_t$, $S^j_t$, $\wh\sigma_j(t)$ are well defined on $[0,\tau_j)$ and $\tau_j < \infty$ then we define for $t\geq \tau_j$,
\begin{align*}
    Y^{j+1}_t &= y + X_t + \sum_{n = 1}^{j} \eta_n(Y^n(\tau_{n}-),S^n(\tau_n-))\\
    S^{j+1}_t &= s + S^j(\tau_j-)
           +  \int_{\tau_j-}^t  W(Y^{j+1}_r) \, dr,\\
   \wh \sigma_{j+1}(t) &= \tau_j
  + \int_{\tau_j}^t \|g(Y^{j+1}_r,S^{j+1}_r,\cdot)\|_1 \, dr, \\
    \tau_{j+1} &= \inf_{t \ge \tau_j} \{\wh\sigma_{j+1}(t) = \sigma_{j+1}\}.
\end{align*}
Let $\tau_0 =0$
$(Y_t,S_t) = (Y^j_t, S^j_t)$ for $ \tau_{j-1} \leq t < \tau_j$, $j\geq 1$. It is easy to see that
$(Y_t,S_t)$ is defined for all $t\geq 0$, a.s.
If we put $\sigma(t) = \int_{0}^t \|g(Y_r,S_r,\cdot)\|_1 \, dr $ then we can represent $(Y_t,S_t)$ by the following closed-form expression,
\begin{equation}\label{rep12}\left\{\begin{aligned}
    Y_t &= y + X_t + \sum_{n = 1}^{N(\sigma(t))} \eta_n(Y(\tau_{n}-),S(\tau_n-)),\\
    S_t &= s + \int_0^t W(Y_r) \, dr.
\end{aligned}\right.\end{equation}

We define the semigroup $\{T_t\}_{t \ge 0}$ of the process $(Y_t,S_t)$ for $f \in C_b(\R^2)$ by
\begin{equation*}
    T_tf(y,s) = \Ee^{(y,s)} f(Y_t,S_t), \quad (y,s) \in \Rt.
\end{equation*}
By $\calG^{(Y,S)}$ we denote the generator of $\{T_t\}_{t \ge 0}$ and its domain by $\calD(\calG^{(Y,S)})$. We will show in Proposition \ref{genYS2} that $C_*^2(\R^2) \subset \calD(\calG^{(Y,S)})$ and that $\calG^{(Y,S)} f$ is given by \eqref{GYS} for $f \in C_*^2(\R^2)$, see Subsection \ref{notation} for the definition of $C_*^2(\R^2)$.

Our construction of $(Y_t,S_t)$ is a deterministic map
\begin{align*}
\left\{ (\unif_n)_{n\in \N}, (N(t))_{t\geq 0}, (X(t))_{t\geq 0}\right\}
\xrightarrow{\qquad}
\left\{ (Y(t))_{t\geq 0}, (S(t))_{t\geq 0}\right\}.
\end{align*}
This easily implies the strong Markov property for $(Y,S)$.
We will
 verify that
$(Z_t,S_t):=(e^{iY_t},S_t)$
 is also
a strong Markov
 process. We first show that the transition function of $(Y_t,S_t)$ is periodic.
\begin{lemma}\label{sol-periodic2}
    Let $(Y_t,S_t)$ be the Markov process defined by \eqref{rep12}. Then
    $$
        \Pp^{(y + 2 \pi,s)}(Y_t \in A + 2 \pi, \, S_t \in B) = \Pp^{(y,s)}(Y_t \in A, \, S_t \in B),
    $$
for all $(y,s)\in\R^2$ and all Borel sets $A,B\subset\R$.
\end{lemma}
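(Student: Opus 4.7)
The plan is to exploit the $2\pi$-periodicity of $W$ (which holds because $W(x) = V(e^{ix})$) together with the fact that the construction in \eqref{rep12} is a deterministic, path-by-path map from $\{(\unif_n), N, X\}$ to $(Y,S)$. Because the random inputs do not depend on the starting point, it suffices to couple the processes started at $(y,s)$ and $(y+2\pi,s)$ using the \emph{same} copies of $(\unif_n)_{n\in\N}$, $N(\cdot)$ and $X(\cdot)$, and to show pathwise that the second process equals the first one shifted by $2\pi$ in the spatial coordinate.

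The first step is a set of elementary periodicity identities for the ingredients of the construction. Since $W(y+2\pi)=W(y)$, one checks directly from the definition that
\[
    g(y+2\pi,s,x) = g(y,s,x), \qquad \|g(y+2\pi,s,\cdot)\|_1 = \|g(y,s,\cdot)\|_1,
\]
which in turn gives $\ol g(y+2\pi,s,\cdot) = \ol g(y,s,\cdot)$, so that $F_{y+2\pi,s}^{-1} = F_{y,s}^{-1}$ and consequently $\eta_n(y+2\pi,s) = \eta_n(y,s)$ for every $n$.

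With these identities in hand the proof proceeds by induction on the jump index $j$. Writing $(Y^{j,(y,s)}_t,S^{j,(y,s)}_t)$ and $(Y^{j,(y+2\pi,s)}_t,S^{j,(y+2\pi,s)}_t)$ for the two coupled approximations, the base case $j=1$ is immediate: $Y^{1,(y+2\pi,s)}_t = Y^{1,(y,s)}_t + 2\pi$, and since $W$ is $2\pi$-periodic, $S^{1,(y+2\pi,s)}_t = S^{1,(y,s)}_t$. The time-change $\wh\sigma_1(t)$ is then the same for both processes by the periodicity of $\|g(\cdot,\cdot)\|_1$, so $\tau_1$ coincides as well. Assuming the equalities
\[
    Y^{j,(y+2\pi,s)}_t = Y^{j,(y,s)}_t + 2\pi, \quad S^{j,(y+2\pi,s)}_t = S^{j,(y,s)}_t, \quad \tau_j^{(y+2\pi,s)} = \tau_j^{(y,s)}
\]
up to level $j$, the jump $\eta_j$ added at time $\tau_j-$ is the same in both constructions by the identity $\eta_n(y+2\pi,s) = \eta_n(y,s)$, and the same argument applied on $[\tau_j,\tau_{j+1})$ propagates the three equalities to level $j+1$.

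Since $(Y_t,S_t) = (Y_t^j,S_t^j)$ on $[\tau_{j-1},\tau_j)$ and $\tau_j\to\infty$ a.s., the inductive step yields $Y_t^{(y+2\pi,s)} = Y_t^{(y,s)} + 2\pi$ and $S_t^{(y+2\pi,s)} = S_t^{(y,s)}$ almost surely for every $t\ge 0$, which gives the claimed identity of distributions. The main obstacle is merely the bookkeeping of this simultaneous coupling through the recursive definition; no analytic difficulty appears because the periodicity of $W$ transfers cleanly through every ingredient of the Meyer-type construction.
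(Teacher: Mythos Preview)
Your proof is correct and follows essentially the same route as the paper: both exploit the $2\pi$-periodicity of $W$ to show that every ingredient of the Meyer-type construction ($g$, $\|g\|_1$, $\eta_n$, $\tau_j$) is invariant under the shift $y\mapsto y+2\pi$, and then conclude that the coupled solutions satisfy $Y_t^{(y+2\pi,s)} = Y_t^{(y,s)}+2\pi$ and $S_t^{(y+2\pi,s)} = S_t^{(y,s)}$. The only cosmetic difference is that you make the induction on $j$ explicit, whereas the paper works directly with the closed-form representation \eqref{rep12} and appeals to uniqueness of its solution.
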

\begin{proof}
    Let $X_t$ be a symmetric $\alpha$-stable process,  starting from $0$,  $\alpha\in (0,2)$, and let $N(t)$ be a Poisson process with intensity $1$. By $(Y_t^y,S_t^s)$ we denote the process given by \eqref{rep12} with initial value $(Y_0^y,S_0^s)=(y,s)$.
The process $(\tilde Y_t, \tilde S_t) := (Y_t^{y+2\pi},S_t^s)$
 has the following representation
    \begin{align*}
        \tilde{Y}_t & = y + 2 \pi + X_t + \sum_{n = 1}^{N(\tilde{\sigma}(t))} \eta_n(\tilde{Y}(\tilde{\tau}_{n}-),\tilde{S}(\tilde{\tau}_n-)),\\
        \tilde{S}_t & = s + \int_0^t W(\tilde{Y}_r) \, dr,
    \end{align*}
where $\tilde{\sigma}(t) = \int_{0}^t \|g(\tilde{Y}_r,\tilde{S}_r,\cdot)\|_1 \, dr$ and $\tilde{\tau}_k = \inf_{t \ge 0} \{\tilde{\sigma}(t) = \sigma_k\}$.

    Note that for all $x\in\R$,
    \begin{gather*}
        g(y - 2 \pi,s,x) = g(y,s,x)
\quad\text{and, therefore,}\quad
         \|g(y - 2 \pi,s,\cdot)\|_1 = \|g(y,s,\cdot)\|_1.
    \end{gather*}
It follows that
 $\eta_n(y - 2\pi,s)$ has the same distribution as $\eta_n(y,s)$. Since the function $W$ is periodic with period $2 \pi$, we have $W(\tilde{Y}_r) = W(\tilde{Y}_r - 2 \pi)$. Moreover, $\|g(\tilde{Y}_r,\tilde{S}_r,\cdot)\|_1 =  \|g(\tilde{Y}_r - 2 \pi,\tilde{S}_r,\cdot)\|_1$ and,  $\eta_n(\tilde{Y}(\tilde{\tau}_{n}-),\tilde{S}(\tilde{\tau}_n-))$ has the same distribution as $\eta_n(\tilde{Y}(\tilde{\tau}_{n}-)- 2 \pi,\tilde{S}(\tilde{\tau}_n-))$. This means that we can rewrite the representation of  $(Y_t^{y+2\pi},S_t^s)$ in the following way:
    \begin{align*}
        \tilde{Y}_t & = y + 2 \pi + X_t + \sum_{n = 1}^{N(\tilde{\sigma}(t))} \eta_n(\tilde{Y}(\tilde{\tau}_{n}-) - 2 \pi,\tilde{S}(\tilde{\tau}_n-)),\\
        \tilde{S}_t & = s + \int_0^t W(\tilde{Y}_r - 2 \pi) \, dr,
    \end{align*}
    where $\tilde{\sigma}(t)  = \int_{0}^t \|g(\tilde{Y}_r - 2 \pi,\tilde{S}_r,\cdot)\|_1 \, dr$ and $\tilde{\tau}_k = \inf_{t \ge 0} \{\tilde{\sigma}(t) = \sigma_k\}$.

    By subtracting $2 \pi$ from both sides of the first equation we get
    \begin{align*}
        \tilde{Y}_t - 2 \pi & = y + X_t + \sum_{n = 1}^{N(\tilde{\sigma}(t))} \eta_n(\tilde{Y}(\tilde{\tau}_{n}-) - 2 \pi,\tilde{S}(\tilde{\tau}_n-)),\\
        \tilde{S}_t & = s + \int_0^t W(\tilde{Y}_r - 2 \pi) \, dr,
    \end{align*}
    with $\tilde{\sigma}(t)$ and $\tilde{\tau}_k$ as before. Substituting $\hat Y_t := \tilde Y_t - 2\pi$ we see that this is the defining system of equations for the process $(Y^y_t,S^s_t)$. Therefore, the processes $(Y_t^y,S_t^s)$ and $(Y_t^{y+2\pi},S_t^s)$ have the same law.
\end{proof}

We can now argue exactly as in \cite[Corollary 2.3]{BKS} to see that $(Z_t,S_t)=(e^{iY_t},S_t)$ is indeed a strong Markov process.  We define the transition semigroup of $(Z_t,S_t)$ for $f \in C_0(\uni \times \R)$ by
\begin{equation}\label{TtD1}
    T_t^\uni f(z,s) = \Ee^{(z,s)} f(Z_t,S_t), \quad (z,s) \in \uni \times \R.
\end{equation}

The generator of $\{T_t^\uni\}_{t \ge 0}$ and its domain will be denoted $\calG$ and $\calD(\calG)$.

In the sequel we will need the following auxiliary processes
\begin{align*}
    \hat{Y}_t &= \hat{Y}_0 + X_t, \\
    \hat{S}_t &= \hat{S}_0 + \int_0^t W(\hat{Y}_r) \, dr, \\
    \hat{Z}_t &= e^{i \hat{Y}_t},
\end{align*}
where $X_t$ is a symmetric $\alpha$-stable L\'evy process on $\R$, $\alpha \in (0,2)$, starting from $0$.
We will use the following notation:

$$
\begin{array}{l||l|l}
\vphantom{\displaystyle\int}
\text{Process} & \text{Semigroup} &\text{Generator and domain}\\\hline
\vphantom{\displaystyle\int\limits_0^1}
(Y_t,S_t)
    & T_t, t\geq 0
    & \big(\calG^{(Y,S)},\calD(\calG^{(Y,S)})\big)\\\hline
\vphantom{\displaystyle\int\limits_0^1}
(Z_t,S_t) = (e^{iY_t},S_t)
    & T^\uni_t, t\geq 0
    & \big(\calG,\calD(\calG)\big)\\\hline
\vphantom{\displaystyle\int\limits_0^1}
(\hat{Y}_t,\hat{S}_t)=\big(\hat Y_0+X_t,\hat S_0+\int_0^t W(\hat Y_r)\,dr\big)
    & \hat{T}_t, t\geq 0
    & \big(\calG^{(\hat{Y},\hat{S})}, \calD(\calG^{(\hat{Y},\hat{S})})\big)\\\hline
\vphantom{\displaystyle\int\limits_0^1}
(\hat{Z}_t,\hat{S}_t) = (e^{i\hat Y_t},\hat S_t)
    & \hat{T}_t^\uni, t\geq 0
    & \big(\hat\calG,\calD(\hat{\calG})\big)
\end{array}
$$

We will now identify the generators of the processes
$(Y_t,S_t)$ and
 $(Z_t,S_t)$ and link them with the generators of the processes $(\hat{Y}_t,\hat{S}_t)$ and
 $(\hat{Z}_t,\hat{S}_t)$.
\begin{proposition}\label{genYS1}
    Let $(Y_t,S_t)$ be the process defined by \eqref{rep12} and let $f \in C_*(\Rt)$. Then
    $$
        \lim_{t \to 0^+} \frac{T_t f - f}{t} \quad \text{exists} \quad \iff \quad
        \lim_{t \to 0^+} \frac{\hat{T}_t f - f}{t} \quad \text{exists},
    $$
in the norm $\|\cdot\|_{\infty}$.
If one, hence both, limits exist, then
    \begin{equation}\label{new-eq}
        \lim_{t \to 0^+} \frac{T_t f - f}{t} = \lim_{t \to 0^+} \frac{\hat{T}_t f - f}{t} + Rf,
    \end{equation}
    where $Rf$ is given by \eqref{defR1}.
\end{proposition}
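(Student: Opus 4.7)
The plan is to prove the stronger quantitative statement
\[
    \left\|\frac{T_t f - \hat{T}_t f}{t} - Rf\right\|_\infty \xrightarrow[t\to 0^+]{} 0,
\]
which, in view of the identity $T_t f - f = (\hat{T}_t f - f) + (T_t f - \hat{T}_t f)$, immediately yields both the equivalence of the two limits and the formula \eqref{new-eq}.

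The first step is to reduce to a bounded window in $s$. Because $\supp f \subset \R \times [-N,N]$ and, deterministically along every sample path, $|S_t-s| \vee |\hat{S}_t-s| \le t\|W\|_\infty$, both $f(Y_t,S_t)-f(\hat Y_t,\hat S_t)$ and $Rf(y,s)$ vanish whenever $|s|>N + t\|W\|_\infty$. Thus it suffices to establish uniform convergence on the strip $\R \times [-N-1,N+1]$, where the rate $\|g(y,s,\cdot)\|_1 \le 4\pi(N+1)\|W\|_\infty$ is uniformly bounded. Next, decompose according to the number of Meyer jumps in $[0,t]$: on $\{\tau_1>t\}$ the two processes coincide, so
\[
    T_t f(y,s) - \hat{T}_t f(y,s) = \Ee^{(y,s)}\!\left[\big(f(Y_t,S_t)-f(\hat Y_t,\hat S_t)\big)\,\I_{\{\tau_1 \le t\}}\right].
\]
The uniform rate bound yields $\Pp^{(y,s)}(\tau_2 \le t) \le c\,t^2$, so the contribution from two or more jumps is absorbed into a uniform $O(t^2)\|f\|_\infty$ remainder.

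For the single-jump piece on $\{\tau_1 \le t < \tau_2\}$ I would use the representation $\Pp^{(y,s)}(\tau_1 > r \mid \hat Y,\hat S) = \exp(-\wh\sigma_1(r))$ to condition on the exponential clock $\sigma_1$ and on the path of $(\hat Y,\hat S)$: up to time $\tau_1-$ the processes coincide; at $\tau_1$ the variable $\eta_1$ has conditional density $\ol g(Y_{\tau_1-},S_{\tau_1-},\cdot)$; and on the remaining tail $(\tau_1,t]$ the increments of $S$ and $X$ are small. Combining this structure with the uniform continuity of $f$ and the change of variables $x=y+z$ in the jump integral gives, as $t \to 0^+$,
\[
    \Ee^{(y,s)}\!\left[\big(f(Y_t,S_t)-f(\hat Y_t,\hat S_t)\big)\,\I_{\{\tau_1\le t<\tau_2\}}\right] = t\,Rf(y,s) + o(t).
\]

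The main obstacle is to guarantee that all $o(t)$ estimates are \emph{uniform} in $(y,s)$. This hinges on controlling the $\alpha$-stable driver $X$, which can produce arbitrarily large jumps on arbitrarily short time intervals, thereby threatening the approximations $Y_{\tau_1-} \approx y$ and $S_{\tau_1-} \approx s$. To handle this I would truncate on the event $\{\sup_{0 \le r \le t}|X_r| \le \varepsilon\}$, whose complement has probability of order $t\,\varepsilon^{-\alpha}$ by L\'evy-measure estimates, and then let $\varepsilon = t^{\beta}$ with $0<\beta<1/\alpha$. On this good event, uniform continuity of $f$ together with the smoothness and periodicity of $W$ and the boundedness of $s$ translate the smallness of $|X_r|$ into uniform smallness of $|f(Y_{\tau_1-},S_{\tau_1-})-f(y,s)|$ and $\|g(Y_{\tau_1-},S_{\tau_1-},\cdot)-g(y,s,\cdot)\|_1$, independent of the base point $(y,s)$. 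Combining this with the bounded-rate machinery from the previous step delivers the claimed uniform convergence.
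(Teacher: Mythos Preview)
Your approach is correct and follows the same Meyer-jump decomposition as the paper (split by the number of extra jumps in $[0,t]$, dispose of $\ge 2$ jumps as $O(t^2)$, match the zero-jump piece with $\hat T_t$, and extract $Rf$ from the one-jump piece), but the technical handling of the one-jump term differs in two interesting ways. First, by conditioning on the path of $(\hat Y,\hat S)$ and on the clock so that the jump rate $\|g(\hat U_r,\cdot)\|_1$ multiplies the normalized density $\ol g(\hat U_r,\cdot)$, you make the normalization cancel and reduce everything to the Lipschitz estimate for the \emph{unnormalized} $g$; the paper instead compares $\eta_1(\hat U_{\tau_1-})$ with $\eta_1(u)$ directly and therefore needs the auxiliary Lemma~\ref{jump}, which balances the possible blow-up of $1/\|g(u,\cdot)\|_1$ against the smallness of the jump probability. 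Second, for uniformity in $(y,s)$ the paper simply observes that its error terms are bounded by quantities like $\Ee\big[\varepsilon\big(f;\sup_{r\le h}|X_r|\vee h\|W\|_\infty\big)\big]$ which do not depend on $(y,s)$, so right-continuity of $X$ together with bounded convergence suffices; your quantitative truncation with $\varepsilon=t^\beta$ and the stable tail bound $\Pp(\sup_{r\le t}|X_r|>\varepsilon)=O(t\,\varepsilon^{-\alpha})$ works as well but is heavier than necessary---the same Lipschitz bound $|g(\hat U_r,x)-g(u,x)|\le c\big((|X_r|+r\|W\|_\infty)\wedge 1\big)$ is already independent of $(y,s)$ and hence amenable to the paper's dominated-convergence trick.
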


\begin{corollary}\label{genZS1}
    We have
    $$
        f \in \calD(\calG) \cap C_c(\uni \times \R)
        \iff
        f \in \calD(\hat{\calG}) \cap C_c(\uni \times \R).
    $$
    If $f \in \calD(\calG) \cap C_c(\uni \times \R)$ then
    $$
        \calG f = \hat{\calG} f + R^\uni f,
    $$
    where
    $$
        R^\uni f(z,s) = \int_{\uni} (f( w,s) - f(z,s)) ((V(z) - V( w))s)_{+} \, d w.
    $$
\end{corollary}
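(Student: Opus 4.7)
The plan is to reduce to Proposition \ref{genYS1} via the covering map $\pi:\Rt\to\uni\times\R$, $\pi(y,s)=(e^{iy},s)$. Given $f\in C_c(\uni\times\R)$, set $\tilde f(y,s):=f(e^{iy},s)$. Then $\tilde f$ is bounded and uniformly continuous, $2\pi$-periodic in $y$, and has compact support in the $s$-variable inherited from $f$; hence $\tilde f\in C_*(\Rt)$. Using $Z_t=e^{iY_t}$ together with Lemma \ref{sol-periodic2}, one checks directly that
\begin{equation*}
    T_t\tilde f(y,s) = \Ee^{(y,s)}f(e^{iY_t},S_t) = T_t^\uni f\circ\pi(y,s),
\end{equation*}
and $T_t\tilde f$ is itself $2\pi$-periodic in $y$. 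An analogous identity holds for the unperturbed pair $(\hat Y,\hat S)$, which is also translation-equivariant under $y\mapsto y+2\pi$ because $W$ is $2\pi$-periodic.

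Because all the lifted functions are $2\pi$-periodic in $y$, the sup norm on $\Rt$ agrees with the sup norm on $\uni\times\R$ of their descents. Consequently $(T_t^\uni f - f)/t$ converges uniformly as $t\downarrow 0$ if and only if $(T_t\tilde f-\tilde f)/t$ does, and similarly for the hat semigroups. Proposition \ref{genYS1} applied to $\tilde f$ therefore yields the equivalence $f\in\calD(\calG)\iff f\in\calD(\hat\calG)$ within $C_c(\uni\times\R)$, together with
\begin{equation*}
    \calG f\circ\pi = \hat\calG f\circ\pi + R\tilde f.
\end{equation*}

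It remains to identify $R\tilde f$ with $R^\uni f\circ\pi$. Substituting $u=x-y$ in \eqref{defR1} and then $w=e^{iy}e^{iu}$, and using $W(y)=V(e^{iy})$ together with the fact that $u\mapsto e^{iy}e^{iu}$ transports Lebesgue measure on $(-\pi,\pi)$ to arc-length measure on $\uni$ (modulo the null point $w=-e^{iy}$), one obtains
\begin{equation*}
    R\tilde f(y,s) = \int_{\uni}\big(f(w,s) - f(e^{iy},s)\big)\big((V(e^{iy})-V(w))s\big)_+\,dw = R^\uni f(e^{iy},s),
\end{equation*}
which combined with the previous display gives $\calG f = \hat\calG f + R^\uni f$ after descent through $\pi$. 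The only step that needs real care is the initial semigroup identity $T_t\tilde f = T_t^\uni f\circ\pi$: this is essentially bookkeeping but relies on the pathwise equality $Z_t=e^{iY_t}$ from the construction and on the periodicity established in Lemma \ref{sol-periodic2}. Once the lift is in place, the corollary follows mechanically from Proposition \ref{genYS1} and the change of variables above.
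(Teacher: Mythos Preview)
Your proof is correct and follows essentially the same route as the paper: lift $f$ to $\tilde f\in C_*(\Rt)$, identify the difference quotients on $\uni\times\R$ with those on $\Rt$ via the relation $T_t^\uni f(z,s)=T_t\tilde f(y,s)$ (the paper cites \cite[eq.\ (2.9)]{BKS} for this, you derive it from Lemma~\ref{sol-periodic2}), apply Proposition~\ref{genYS1}, and then check $R\tilde f = R^\uni f\circ\pi$. Your change-of-variables computation for this last identity is slightly more explicit than the paper's one-line claim \eqref{RRD}, but otherwise the arguments coincide.
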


\begin{proposition}\label{genYS2}
    Let $(Y_t,S_t)$ be the process defined by \eqref{rep12}. Then $C_*^2(\Rt) \subset \calD(\calG^{(Y,S)})$ and for $f \in C_*^2(\Rt)$ we have
    \begin{equation}\label{formulaYS2a}
        \calG^{(Y,S)} f(y,s) = -(-\Delta_y)^{\alpha/2} f(y,s) + Rf(y,s) + W(y) f_s(y,s),
    \end{equation}
    for all $(y,s) \in \Rt$ with $Rf$ given by \eqref{defR1}.

    Moreover, $C_*^2(\Rt) \subset \calD(\calG^{(\hat{Y},\hat{S})})$ and for $f \in C_*^2(\Rt)$ we have
    \begin{equation}\label{formulaYS2b}
        \calG^{(\hat{Y},\hat{S})} f(y,s) = -(-\Delta_y)^{\alpha/2} f(y,s) + W(y) f_s(y,s),
    \end{equation}
    for all $(y,s) \in \Rt$.
\end{proposition}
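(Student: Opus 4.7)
The plan is to prove the formula \eqref{formulaYS2b} for the auxiliary process $(\hat Y,\hat S)$ first, and then deduce \eqref{formulaYS2a} immediately from Proposition \ref{genYS1}: since that proposition establishes that the limits $\lim_{t\to 0^+}(\hat T_t f - f)/t$ and $\lim_{t \to 0^+}(T_t f - f)/t$ in the sup norm exist simultaneously and differ by exactly $Rf$, once \eqref{formulaYS2b} is proved on $C_*^2(\Rt)$, the formula \eqref{formulaYS2a} follows by adding $Rf$.

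Fix $f \in C_*^2(\Rt)$ and $(y,s)\in\Rt$. Since $\hat Y_t = y + X_t$ and $\hat S_t = s + \int_0^t W(y+X_r)\,dr$, I decompose
\begin{equation*}
f(\hat Y_t, \hat S_t) - f(y,s) = [f(y+X_t, s) - f(y, s)] + [f(y+X_t, \hat S_t) - f(y+X_t, s)].
\end{equation*}
The first bracket involves only the stable process in the $y$-variable: since $f(\cdot, s) \in C_b^2(\R)$ with bounds uniform in $s$, the classical theory of the symmetric $\alpha$-stable semigroup gives $t^{-1}\Ee[f(y+X_t, s) - f(y, s)] \to -(-\Delta_y)^{\alpha/2}f(y,s)$ as $t\to 0^+$, uniformly in $(y,s)$. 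For the second bracket, I would Taylor-expand in the $s$-variable: since $|\hat S_t - s| \le t\|W\|_\infty$,
\begin{equation*}
f(y+X_t,\hat S_t) - f(y+X_t, s) = f_s(y+X_t, s)\int_0^t W(y+X_r)\,dr + E_t,
\end{equation*}
with $|E_t| \le \tfrac12\|f_{ss}\|_\infty \|W\|_\infty^2\, t^2$. Dividing by $t$, the remainder vanishes uniformly. For the leading piece, I would combine uniform continuity of $W$ and $f_s$ with the right-continuity of $r\mapsto X_r$ at $0$ and the support condition $\supp f \subset \R\times[-N,N]$ to show
\begin{equation*}
\frac{1}{t}\Ee\!\left[f_s(y+X_t,s)\int_0^t W(y+X_r)\,dr\right]\xrightarrow[t\to 0^+]{} W(y)\,f_s(y,s),
\end{equation*}
uniformly in $(y,s)\in\Rt$.

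Summing the two limits gives $\calG^{(\hat Y,\hat S)}f(y,s) = -(-\Delta_y)^{\alpha/2} f(y,s) + W(y) f_s(y,s)$, hence $C_*^2(\Rt)\subset \calD(\calG^{(\hat Y,\hat S)})$, and then adding $Rf$ via Proposition \ref{genYS1} yields \eqref{formulaYS2a}. The main technical obstacle is the uniform-in-$y$ convergence of the stable-process difference quotient to the fractional Laplacian for functions in $C_b^2(\R)$. One way to handle this is to write $\Ee[f(y+X_t,s)-f(y,s)] = \int(f(y+z,s)-f(y,s))\,p_t(z)\,dz$ with $p_t$ the stable density, split the integration region at the scale $t^{1/\alpha}$, and apply the scaling $p_t(z) = t^{-1/\alpha}\,p_1(z/t^{1/\alpha})$. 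Near $z=0$ one exploits the $C^2$-bound on $f(\cdot,s)$ to control the symmetric second-order error; far from $z=0$ one uses boundedness of $f$ and the polynomial tail of $p_1$ of order $1+\alpha$. Because $\|f\|_\infty$ and the derivative bounds are uniform in $s$, these estimates give uniform convergence in $(y,s)$.
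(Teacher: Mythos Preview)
Your proposal is correct and follows essentially the same approach as the paper: both reduce \eqref{formulaYS2a} to \eqref{formulaYS2b} via Proposition~\ref{genYS1}, then decompose $f(\hat Y_t,\hat S_t)-f(y,s)$ into the ``$y$-only'' increment (handled by the known generator formula for the stable semigroup on $C_b^2$) and the ``$s$-increment at frozen $y+X_t$'' (handled by Taylor expansion in $s$ together with $|\hat S_t-s|\le t\|W\|_\infty$ and dominated convergence using $\sup_{r\le t}|X_r|\to 0$). The only cosmetic differences are that the paper uses a first-order mean-value expansion and an explicit three-term split $\text{I}_1+\text{I}_2+\text{I}_3$ for the drift piece, whereas you use a second-order Taylor remainder and a more compressed uniform-continuity argument; conversely, you supply more detail on the uniform convergence to the fractional Laplacian, which the paper simply records as well known.
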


By $\Arg(z)$ we denote the argument of $z \in \C$ contained in $(-\pi,\pi]$. For $g \in C^2(\uni)$ let us put
\begin{equation}\label{Lz1}\begin{aligned}
    Lg(z)
    &=  \calA_{\alpha} \lim_{\eps \to 0^+} \int_{\uni \cap \{|\Arg(w/z)| > \eps\}} \frac{g(w) - g(z)}{|\Arg(w/z)|^{1 + \alpha}} \, dw \\
    &\qquad+\calA_{\alpha} \sum_{n \in \Z \setminus \{0\}} \int_{\uni} \frac{g(w) - g(z)}{|\Arg(w/z) + 2 n \pi|^{1 + \alpha}} \, dw,
\end{aligned}\end{equation}
where $dw$ denotes the
arc length measure on $\uni$; note that $\int_\uni \,dw = 2 \pi$.
 It is clear that for $f \in C_c^2(\uni \times \R)$, $z= e^{iy}$, $y,s \in \R$ we have
\begin{equation}
\label{Lz3}
-(-\Delta_y)^{\alpha/2}\tilde{f}(y,s) = L_z f(z,s).
\end{equation}

\begin{corollary}\label{genZS2}
    We have $C_c^2(\uni \times \R) \subset \calD(\calG)$ and for $f \in C_c^2(\uni \times \R)$ we have
    $$
        \calG f(z,s) = L_z f(z,s) + R^\uni f(z,s) + V(z) f_s(z,s),
    $$
    for all $(z,s) \in \uni \times \R$, where $L$ is given by  \eqref{Lz1}.

    We also have  $C_c^2(\uni \times \R) \subset \calD(\hat{\calG})$ and for $f \in C_c^2(\uni \times \R)$ we have
    $$
        \hat{\calG} f(z,s) = L_z f(z,s) + V(z) f_s(z,s),
    $$
    for all $(z,s) \in \uni \times \R$.
\end{corollary}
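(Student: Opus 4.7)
The plan is to pull everything back from $\uni\times\R$ to $\Rt$ via the lift $\tilde f(y,s):=f(e^{iy},s)$, apply Proposition \ref{genYS2} to the lifted function, and then translate the identities back. The identification $(Z_t,S_t)=(e^{iY_t},S_t)$ forces
\begin{equation*}
    T_t^\uni f(e^{iy},s) = \Ee^{(y,s)} f(e^{iY_t},S_t) = \Ee^{(y,s)}\tilde f(Y_t,S_t) = T_t\tilde f(y,s),
\end{equation*}
so convergence of $(T_t\tilde f-\tilde f)/t$ in $\|\cdot\|_\infty$ on $\Rt$ is exactly equivalent to convergence of $(T_t^\uni f-f)/t$ in $\|\cdot\|_\infty$ on $\uni\times\R$, with limits related by the same lift. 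This reduces the statement about $\calG$ to the statement about $\calG^{(Y,S)}$ applied to $\tilde f$.

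First I would check that $\tilde f\in C_*^2(\Rt)$: since $f\in C_c^2(\uni\times\R)$ has $s$-support in some $[-N,N]$, the lift $\tilde f$ is $2\pi$-periodic in $y$, bounded and uniformly continuous, with support in $\R\times[-N,N]$, and its partial derivatives up to order two exist and are bounded (they come from the smooth chart $y\mapsto e^{iy}$ composed with $f$). Proposition \ref{genYS2} then gives $\tilde f\in\calD(\calG^{(Y,S)})$ with
\begin{equation*}
    \calG^{(Y,S)}\tilde f(y,s) = -(-\Delta_y)^{\alpha/2}\tilde f(y,s) + R\tilde f(y,s) + W(y)\tilde f_s(y,s),
\end{equation*}
so the limit $(T_t\tilde f-\tilde f)/t$ exists in $\|\cdot\|_\infty$, and by the equivalence above, $f\in\calD(\calG)$ with $\calG f(z,s)$ equal to the above expression read through $z=e^{iy}$.

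The remaining step is to identify each term on $\uni\times\R$. The nonlocal piece is precisely \eqref{Lz3}: $-(-\Delta_y)^{\alpha/2}\tilde f(y,s)=L_z f(z,s)$. The drift piece is immediate from $W(y)=V(e^{iy})$ and $\tilde f_s(y,s)=f_s(e^{iy},s)$. For the jump term, substitute $w=e^{ix}$ in
\begin{equation*}
    R\tilde f(y,s) = \int_{-\pi+y}^{\pi+y}\bigl(\tilde f(x,s)-\tilde f(y,s)\bigr)\bigl((W(y)-W(x))s\bigr)_+\,dx;
\end{equation*}
the map $x\mapsto e^{ix}$ is a bijection from $(-\pi+y,\pi+y)$ onto $\uni\setminus\{-e^{iy}\}$, $dx$ becomes the arc-length element $dw$, and $W(x)=V(w)$, which converts the integral into $R^\uni f(z,s)$. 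This yields the first claim.

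For the second claim the argument is identical but uses the companion formula \eqref{formulaYS2b} for $\calG^{(\hat Y,\hat S)}$ in place of \eqref{formulaYS2a}; alternatively one can simply subtract: once $f\in\calD(\calG)\cap C_c(\uni\times\R)$ is established, Corollary \ref{genZS1} gives $f\in\calD(\hat\calG)$ and $\hat\calG f=\calG f-R^\uni f=L_z f+V(z)f_s$. There is no real obstacle here: the content is entirely encoded in Proposition \ref{genYS2} together with the lift, and the only step requiring care is verifying that $\tilde f$ really lies in $C_*^2(\Rt)$ (in particular that the $s$-support condition is preserved by the lift) and that the change of variables $x\mapsto e^{ix}$ converts $R$ into $R^\uni$ without a stray Jacobian factor, both of which are straightforward.
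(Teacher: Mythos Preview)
Your proof is correct and follows essentially the same route as the paper: lift $f$ to $\tilde f\in C_*^2(\Rt)$, invoke Proposition~\ref{genYS2}, and translate each term back to $\uni\times\R$ via \eqref{Lz3} and the identification $R\tilde f = R^\uni f$. The paper's version is terser (it simply cites \eqref{quotient1}, \eqref{Lz3}, \eqref{RRD}), while you spell out the verification that $\tilde f\in C_*^2(\Rt)$ and the change of variables $x\mapsto e^{ix}$ explicitly, but the argument is the same.
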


\begin{remark}\label{rem1}
    Proposition \ref{genYS2} shows that for  $f \in C_*^2(\Rt)$  the generator of the process $(Y_t,S_t)$ defined by \eqref{rep12} is of the form \eqref{GYS}. This is a standard result, the so-called ``construction of Meyer'', but we include our own proof of this result so that the paper is self-contained.  Moreover, Proposition \ref{genYS1}, Corollaries \ref{genZS1} and \ref{genZS2} are needed to identify a core for $\calG$. Corollary \ref{genZS2} is also needed to find the stationary measure for $(Z_t,S_t)$.
\end{remark}

We will need two auxiliary results.
\begin{lemma}\label{functiong}
    There exists a constant $c = c( M) > 0$ such that for any $x \in [-\pi,\pi]$ and any $u_1 = (y_1,s_1) \in \Rt$, $u_2 = (y_2,s_2) \in \Rt$ with $s_1,s_2 \in [- M, M]$ we have
    $$
        |g(u_1,x) - g(u_2,x)| \le c\, (|u_2 - u_1| \wedge 1).
    $$
\end{lemma}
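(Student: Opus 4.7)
The function $g(u,x) = ((W(y)-W(y+x))s)_+\,\mathds{1}_{(-\pi,\pi)}(x)$ depends smoothly on $u=(y,s)$ apart from the positive part, which is only $1$-Lipschitz. The plan is to exploit the fact that $t \mapsto t_+$ is $1$-Lipschitz to strip off the $(\cdot)_+$, and then use smoothness and boundedness of $W$ together with the assumption $|s_i|\le M$ to obtain the Lipschitz estimate on $(y,s)$; the truncation at $1$ is then automatic from a crude uniform bound.

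First I would fix $x\in[-\pi,\pi]$ and write $h(y,s,x) := (W(y)-W(y+x))s$. Using $|a_+ - b_+|\le |a-b|$,
\begin{equation*}
    |g(u_1,x) - g(u_2,x)| \le |h(y_1,s_1,x) - h(y_2,s_2,x)|.
\end{equation*}
Then I would add and subtract a mixed term to split this as
\begin{equation*}
    |h(y_1,s_1,x) - h(y_2,s_2,x)|
    \le |W(y_1)-W(y_1+x)|\,|s_1-s_2|
    + |s_2|\,|(W(y_1)-W(y_1+x)) - (W(y_2)-W(y_2+x))|.
\end{equation*}

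Next I would use that $V\in C^3(\uni)$ and $W=V\circ\exp(i\cdot)$, so $W$ and $W'$ are bounded on $\R$ (in fact they are $2\pi$-periodic and continuous on the compact circle). Thus $|W(y)-W(y+x)|\le 2\|W\|_\infty$, and the mean value theorem gives $|(W(y_1)-W(y_1+x))-(W(y_2)-W(y_2+x))|\le 2\|W'\|_\infty\,|y_1-y_2|$. Combining these with $|s_2|\le M$ yields
\begin{equation*}
    |g(u_1,x) - g(u_2,x)| \le 2\|W\|_\infty |s_1-s_2| + 2M\|W'\|_\infty |y_1-y_2|
    \le c(M)\,|u_1-u_2|,
\end{equation*}
where $c(M) = 2(\|W\|_\infty \vee M\|W'\|_\infty)\cdot 2$, say.

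Finally, since $|g(u_i,x)| \le 2\|W\|_\infty M$ for $i=1,2$, we also have the trivial bound $|g(u_1,x)-g(u_2,x)|\le 4M\|W\|_\infty$. Replacing $c(M)$ by $c(M)\vee 4M\|W\|_\infty$ gives the stated bound with $|u_1-u_2|\wedge 1$. The estimate is uniform in $x\in[-\pi,\pi]$ since none of the constants depend on $x$. There is no real obstacle here; the only minor care needed is in checking that the boundedness of $W'$ follows from periodicity and continuity, which it does because $W'(y) = i\,e^{iy}V'(e^{iy})$ with $V'$ continuous on the compact unit circle.
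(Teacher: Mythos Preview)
Your proof is correct and follows essentially the same route as the paper: use the $1$-Lipschitz property of $t\mapsto t_+$ to remove the positive part, split the bilinear expression $(W(y)-W(y+x))s$ by adding and subtracting a mixed term, and then invoke boundedness of $W$ and $W'$ together with $|s_2|\le M$; the $\wedge 1$ comes from the trivial bound $4M\|W\|_\infty$. The only cosmetic differences are your introduction of the auxiliary $h$ and the particular choice of constant.
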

\begin{proof}
From $a_+ = (a + |a|)/2$ we conclude that $|a_+ - b_+| \le |a - b|$ for all $a,b \in \R$.

Let $x \in [-\pi,\pi]$, $u_1 = (y_1,s_1) \in \Rt$, $u_2 = (y_2,s_2) \in \Rt$  and $s_1,s_2 \in [- M, M]$.  We have
\begin{align*}
    |g(u_1&,  x) - g(u_2,x)| \\
    &\le |((W(y_1) -W(y_1 + x)) s_1)_{+} - ((W(y_2) -W(y_2 + x)) s_2)_{+}| \\
    &\le |(W(y_1) -W(y_1 + x)) s_1 - (W(y_2) -W(y_2 + x)) s_2| \\
    &\le |(W(y_1) -W(y_1 + x)) s_1 - (W(y_1) -W(y_1 + x)) s_2| \\
    &\qquad+ |(W(y_1) -W(y_1 + x)) s_2 - (W(y_2) -W(y_2 + x)) s_2| \\
    &\le |W(y_1) -W(y_1 + x)| |s_1 - s_2| + |W(y_1) -W(y_2)| |s_2| \\
    &\qquad+ |W(y_1 +x) -W(y_2 + x)| |s_2| \\
    &\le 2 \|W\|_{\infty} |s_1 - s_2| + 2  M \|W'\|_{\infty} |y_1 - y_2|.
\end{align*}
Since, trivially, $|g(u_1,x) - g(u_2,x)| \le 4 \|W\|_{\infty}  M$, the claim follows with
$c= 4(\|W\|_\infty+\|W'\|_\infty)( M  +1)$.
\end{proof}

As an easy corollary of Lemma \ref{functiong} we get
\begin{lemma}\label{norm}
    There exists a constant $c = c( M) > 0$ such that for any $u_1 = (y_1,s_1) \in \Rt$, $u_2 = (y_2,s_2) \in \Rt$ with $s_1,s_2 \in [- M, M]$ we have
    $$
        \left| \, \|g(u_1,\cdot)\|_1 - \|g(u_2,\cdot)\|_1\right| \le c (|u_2 - u_1| \wedge 1).
    $$
\end{lemma}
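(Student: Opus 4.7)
The plan is essentially a one-line reduction to Lemma \ref{functiong}. By definition,
$$
\|g(u,\cdot)\|_1 = \int_{-\pi}^{\pi} g(u,x)\,dx,
$$
so I would write
$$
\bigl|\|g(u_1,\cdot)\|_1 - \|g(u_2,\cdot)\|_1\bigr|
\;\le\; \int_{-\pi}^{\pi} |g(u_1,x) - g(u_2,x)|\,dx,
$$
and apply Lemma \ref{functiong} pointwise in $x \in [-\pi,\pi]$ (which is exactly the range where $g(u_i,x)$ is supported, thanks to the indicator $\I_{(-\pi,\pi)}$ in the definition of $g$). Since the integrand is bounded by $c(M)\,(|u_2-u_1|\wedge 1)$ uniformly in $x$, integrating over $[-\pi,\pi]$ yields the desired estimate with a new constant $2\pi c(M)$, which we still denote by $c(M)$.

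There is no real obstacle here; the only thing to notice is that the bound from Lemma \ref{functiong} is uniform in $x \in [-\pi,\pi]$ and that the integration domain is compact, so the $\min(\cdot,1)$ factor is preserved up to a multiplicative constant. The hypothesis $s_1,s_2 \in [-M,M]$ is used only through the cited lemma. Thus the lemma is indeed a direct corollary of Lemma \ref{functiong}, as stated in the text.
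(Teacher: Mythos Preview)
Your proof is correct and is precisely the argument the paper has in mind: the text presents Lemma~\ref{norm} as an ``easy corollary of Lemma~\ref{functiong}'' without further detail, and integrating the pointwise estimate over $[-\pi,\pi]$ is exactly how one obtains it.
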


\begin{proof}[Proof of Proposition \ref{genYS1}]
Let $f \in C_*(\Rt)$. Throughout the proof we will assume that $\supp(f) \subset \R \times (-M_0,M_0)$ for some $M_0 > 0$. Note that
$$
    |S_t|
    = \left|S_0 + \int_{0}^t W(Y_r) \, dr \right|
    \le |S_0| + \|W\|_{\infty}
    \le M_0 + \|W\|_{\infty}.
$$
for all starting points $(Y_0,S_0) = (y,s) \in \R \times [-M_0,M_0]$ and all $0 \le t \le 1$. Put
$$
    M_1 = M_0 + \|W\|_{\infty}.
$$
If $(Y_0,S_0) = (y,s) \notin \R \times [-M_1,M_1]$, then
$$
    |S_t| = \left|S_0 + \int_{0}^t W(Y_r) \, dr \right| > M_1 - \|W\|_{\infty} = M_0,
    \quad 0\leq t\leq 1,
$$
so
 $f(Y_t,S_t) = 0$. It follows that for any $(y,s) \notin \R \times [-M_1,M_1]$ and $0 < h \le 1$ we have
$$
    \frac{\Ee^{(y,s)}f(Y_h,S_h) - f(y,s)}{h} = 0.
$$
By the same argument,
$$
    \frac{\Ee^{(y,s)}f(\hat{Y}_h,\hat{S}_h) - f(y,s)}{h} = 0.
$$
It now follows from the definition of $R f(y,s)$ that $R f(y,s) = 0$ for $(y,s) \notin \R \times [-M_1,M_1]$.
 It is, therefore, enough to consider $(y,s) \in \R \times [-M_1,M_1]$.

The arguments above tell us that for all starting points $(Y_0,S_0) = (y,s) \in \R \times [-M_1,M_1]$ and all $0 \le t \le 1$,
$|S_t| \le  |S_0| + \|W\|_{\infty} \leq M_1 + \|W\|_{\infty}$. Setting
$$
     M = M_1 + \|W\|_{\infty},
$$
we get from the definition of the function $g$ that
$$
    \|g(Y_r,S_r,\cdot)\|_1 \le 2 \pi\, 2 \|W\|_{\infty}  M,\quad 0\leq r\leq 1,
$$
and so
$$
    \sigma(t)
    = \int_0^t \|g(Y_r,S_r,\cdot)\|_1 \, dr
    \le  4 \pi \|W\|_{\infty}  M t
    = c_0 t,
    \quad 0 \le t \le 1,
$$
with the constant $c_0 =  4 \pi \|W\|_{\infty}  M$.

From now on we will assume that $(y,s) \in \R \times [-M_1,M_1]$ and $0 < h \le 1$. We have
\begin{align*}
    \frac{T_hf(y,s) - f(y,s)}{h}
    &= \frac{\Ee^{(y,s)}f(Y_h,S_h) - f(y,s)}{h} \\
    &= \frac{1}{h} \Ee^{(y,s)}[f(Y_h,S_h) - f(y,s);\: N(\sigma(h)) = 0] \\
    &\qquad+ \frac{1}{h} \Ee^{(y,s)}[f(Y_h,S_h) - f(y,s);\: N(\sigma(h)) = 1] \\
    &\qquad+ \frac{1}{h} \Ee^{(y,s)}[f(Y_h,S_h) - f(y,s);\: N(\sigma(h)) \ge 2] \\
    &=  \text{I} + \text{II} + \text{III}.
\end{align*}

Since $\sigma(h) \le c_0 h$ we obtain
\begin{align*}
    |\text{III}|
    \leq \frac{2\|f\|_{\infty}}{h}\, \Pp^{(y,s)}[N(\sigma(h)) \ge 2]
    &\le \frac{2\|f\|_{\infty}}{h}\, \Pp^{(y,s)}[N(c_0 h) \ge 2] \\
    &= 2 \|f\|_{\infty}\, \frac{1 - e^{-c_0 h} - c_0 h e^{-c_0 h}}{h}
    \xrightarrow[h\to 0^+]{} 0
\end{align*}
uniformly for all  $(y,s) \in \R \times [-M_1,M_1]$.

Now we will consider the expression $\text{I}$. We have
\begin{align*}
    \text{I}
    &= \frac{1}{h}\, \Ee^{(y,s)}\left[ f\big(y + X_h, s+{\textstyle \int_0^h W(y + X_r) \, dr}\big) - f(y,s);\: N(\sigma(h)) = 0\right] \\
    &= \frac{1}{h}\, \Ee^{(y,s)}\big[f(\hat{Y}_h,\hat{S}_h) - f(y,s);\: N(\sigma(h)) = 0\big] \\
    &= \frac{1}{h}\, \Ee^{(y,s)}\big[f(\hat{Y}_h,\hat{S}_h) - f(y,s)\big]
    - \frac{1}{h}\, \Ee^{(y,s)}\big[f(\hat{Y}_h,\hat{S}_h) - f(y,s);\: N(\sigma(h)) \ge 1\big] \\
    &= \text{I}_1 + \text{I}_2.
\end{align*}
Note that
$$
    \text{I}_1 = \frac{\hat{T}_hf(y,s) - f(y,s)}{h}.
$$
It will suffice to prove that $\text{I}_2 \to 0$ and $\text{II} \to R f$. We have
\begin{align*}
    |\text{I}_2|
    &\le \frac{1}{h}\, \Ee^{(y,s)}\left[|f(\hat{Y}_h,\hat{S}_h) - f(y,s)|;\: N(c_0 h) \ge 1\right] \\
    &= \frac{1 - e^{-c_0 h}}{h} \, \Ee^{(y,s)}\left[|f(\hat{Y}_h,\hat{S}_h) - f(y,s)|\right].
\end{align*}

Recall that $f \in C_*(\Rt)$ is bounded and uniformly continuous. We will use the following modulus of continuity
$$
    \varepsilon(f;\delta) = \varepsilon(\delta) = \sup_{(y,s) \in \R^2} \sup_{|y_1| \vee |s_1| \le \delta} |f(y + y_1, s + s_1) - f(y,s)|.
$$
Clearly, $\varepsilon(\delta) \le 2 \|f\|_{\infty}$ and $\lim_{\delta \to 0^+} \varepsilon(\delta) = 0$.

Note that for $\hat{Y}_0 = y$, $\hat{S}_0 = s$ we have $\hat{Y}_h - y = X_h$, $\hat{S}_h - s =  \int_0^h W(\hat{Y}_r) \, dr $ which gives $|\hat{S}_t - s| \le h \|W\|_{\infty}$ for all $t \le h$. It follows that
$$
    \Ee^{(y,s)}\big[\big|f(\hat{Y}_h,\hat{S}_h) - f(y,s)\big|\big]
    \le
    \Ee^{(y,s)}\left[\varepsilon\left(\sup_{0 < t \le h}(|X_t| \vee h \|W\|_{\infty})\right)\right].
$$
Since $t\mapsto X_t$ is right-continuous and $X_0 \equiv 0$ we
have, a.s.,
$$
    \sup_{0 < t \le h}|X_t| \xrightarrow[h\to 0^+]{} 0
    \quad\text{and, therefore,}\quad
    \varepsilon\left(\sup_{0 < t \le h}(|X_t| \vee h \|W\|_{\infty})\right) \xrightarrow[h\to 0^+]{} 0.
$$
By the bounded convergence theorem
$$
    \Ee^{(y,s)}\left[\varepsilon\left(\sup_{0 < t \le h}(|X_t| \vee h \|W\|_{\infty})\right)\right] \xrightarrow[h\to 0^+]{} 0
$$
uniformly for all $(y,s) \in \R \times [-M_1,M_1]$ because the expression $\varepsilon(\sup_{0 < t \le h}|X_t| \vee h \|W\|_{\infty})$ does not depend on $(y,s)$. It follows that
$$
    |\text{I}_2| \xrightarrow[h\to 0^+]{} 0
$$
uniformly for all $(y,s) \in \R \times [-M_1,M_1]$.

Now we turn to $\text{II}$. We have
\begin{align*}
    \text{II}
    &= \frac{1}{h} \Ee^{(y,s)}[f(Y_h,S_h) - f(Y_h,s);\: N(\sigma(h)) = 1] \\
    &\qquad+ \frac{1}{h} \Ee^{(y,s)}[f(Y_h,s) - f(y,s);\: N(\sigma(h)) = 1]\\
    &= \text{II}_1 + \text{II}_2.
\end{align*}
Since $\sigma(h)\leq c_0 h$
\begin{align*}
    |\text{II}_1|
    &\le \frac{1}{h} \, \Ee^{(y,s)}\left[\left|f\left(Y_h, s + {\textstyle \int_0^h W(Y_r) \, dr}\right) - f(Y_h,s)\right|;\: N(c_0 h) \ge 1\right]\\
    &\le \frac{1}{h} \Ee^{(y,s)}\left[\varepsilon\left(\left|{\textstyle \int_0^h W(Y_r) \, dr }\right| \right);\: N(c_0 h) \ge 1\right]\\
    &\le \frac{1}{h} \, \Ee^{(y,s)}\big[\varepsilon\left(h \|W\|_{\infty}\right);\: N(c_0 h) \ge 1\big]\\
    &= \frac{1 - e^{-c_0 h}}{h} \, \Ee^{(y,s)}\big[\varepsilon\left(h \|W\|_{\infty}\right)\big]
    \xrightarrow[h\to 0^+]{} 0
\end{align*}
uniformly for all  $(y,s) \in \R \times [-M_1,M_1]$. It will suffice to show that $\text{II}_2 \to Rf$.

From now on we will use the following shorthand notation
$$
    U_t := (Y_t,S_t),
    \quad \hat U_t := (\hat Y_t,\hat S_t),
    \quad u := (y,s).
$$
We have
\begin{align*}
    \text{II}_2
    &= \frac{1}{h} \, \Ee^{(y,s)}\big[f(y + X_h + \eta_1(U_{\tau_1-}),s) - f(y + \eta_1(U_{\tau_1-}),s);\: N(\sigma(h)) \ge 1\big] \\
    &\qquad+ \frac{1}{h}\, \Ee^{(y,s)}\big[f(y + \eta_1(U_{\tau_1-}),s) - f(y,s);\: N(\sigma(h)) \ge 1\big] \\
    &\qquad- \frac{1}{h}\, \Ee^{(y,s)}\big[f(y + X_h + \eta_1(U_{\tau_1-}),s) - f(y,s);\: N(\sigma(h)) \ge 2\big]\\
    &= \text{II}_{2a} + \text{II}_{2b} + \text{II}_{2c}.
\end{align*}
Observe that
$$
    |\text{II}_{2c}| \le 2 \|f\|_{\infty}\, \frac{1 - e^{-c_0 h} - c_0 h e^{-c_0 h}}{h} \xrightarrow[h\to 0^+]{} 0
$$
and that the convergence is uniform in $(y,s) \in \R \times [-M_1,M_1]$.

Moreover,
\begin{align*}
    |\text{II}_{2a}|
    &\le \frac{1}{h}\, \Ee^{(y,s)}\big[|f(y + X_h + \eta_1(U_{\tau_1-}),s) - f(y + \eta_1(U_{\tau_1-}),s)|;\: N(c_0 h) \ge 1\big] \\
    &\le \frac{1}{h}\, \Ee^{(y,s)}\left[\varepsilon \left(\sup_{0 \le t \le h} |X_h|\right);\: N(c_0 h) \ge 1 \right] \\
    &= \frac{1 - e^{-c_0 h}}{h} \Ee^{(y,s)}\left[\varepsilon\left(\sup_{0 \le t \le h} |X_h|\right)\right]\\
    &\xrightarrow[h\to 0^+]{} 0
\end{align*}
uniformly for all  $(y,s) \in \R \times [-M_1,M_1]$.
It will suffice to show that $\text{II}_{2b} \to Rf$.

Note that
\begin{equation}
\label{tauh1}
    N(\sigma(h)) \ge 1
    \iff \tau_1 \le h
    \iff \int_0^h \|g(U_r,\cdot)\|_1 \, dr \ge \sigma_1.
\end{equation}

We claim that
\begin{equation}
\label{tauh2}
    \int_0^h \|g(U_r,\cdot)\|_1 \, dr \ge \sigma_1
        \iff \int_0^h \|g(\hat{U}_r,\cdot)\|_1 \, dr \ge \sigma_1.
\end{equation}
First, we assume that $\int_0^h \|g(U_r,\cdot)\|_1 \, dr \ge \sigma_1$. This implies that $\tau_1  \le h$. Recall that $U_r = \hat{U}_r$ for $r < \tau_1$. Hence
$$
    \int_0^h \|g(\hat{U}_r,\cdot)\|_1 \, dr
        \ge \int_0^{\tau_1} \|g(\hat{U}_r,\cdot)\|_1 \, dr
        = \int_0^{\tau_1} \|g(U_r,\cdot)\|_1 \, dr
        = \sigma_1,
$$
where the last equality follows from the definition of $\tau_1$.

Now let us assume that $\int_0^h \|g(U_r,\cdot)\|_1 \, dr < \sigma_1$. This implies that $\tau_1  > h$. Using again $U_r = \hat{U}_r$ for $r < h < \tau_1$, we obtain
$$
    \sigma_1
        > \int_0^h \|g(U_r,\cdot)\|_1 \, dr
        = \int_0^h \|g(\hat{U}_r,\cdot)\|_1 \, dr,
$$
which finishes the proof of (\ref{tauh2}).

By (\ref{tauh1}) and (\ref{tauh2}) we obtain
\begin{align*}
    \text{II}_{2b}
    &= \frac{1}{h}\, \Ee^{(y,s)}\big[f(y + \eta_1(U_{\tau_1-}),s) - f(y,s);\: \tau_1 \le h\big] \\
    &= \frac{1}{h}\, \Ee^{(y,s)}\big[f(y + \eta_1(U_{(\tau_1 \wedge h)-}),s) - f(y,s);\: \tau_1 \le h\big] \\
    &= \frac{1}{h}\, \Ee^{(y,s)}\left[ f(y + \eta_1(\hat U_{(\tau_1 \wedge h)-}),s) - f(y,s);\: {\textstyle \int_0^h \|g(\hat{U}_r,\cdot)\|_1 \, dr } \ge \sigma_1 \right].
\end{align*}

We will use the following abbreviations:
\begin{gather*}
    u = (y,s),\\
    A = \left\{\int_0^h \|g(\hat{U}_r,\cdot)\|_1 \, dr \ge \sigma_1\right\},\\
    B = \left\{\int_0^h \|g(u,\cdot)\|_1 \, dr \ge \sigma_1\right\},\\
    F = \frac{1}{h} \big(f(y + \eta_1(\hat U_{(\tau_1 \wedge h)-}),s) - f(y,s)\big).
\end{gather*}
This allows us to rewrite $\text{II}_{2b}$ as
\begin{align*}
    \text{II}_{2b}
    = \Ee^u[F;A]
    &= \Ee^u[F;B] + \Ee^u[F,A \setminus B] - \Ee^u[F;B \setminus A].
\end{align*}

Recall that $X=(X_t)_{t\geq 0}$, $N=(N(t))_{t\geq 0}$ and $\unif = (\unif_n)_{n\in \N}$ are independent. Therefore the probability measure $\Pp$ can be written in the form $\Pp = \Pp_X\otimes \Pp_N \otimes \Pp_\unif$;  the conditional probability, given $N$ or $\unif$, is $\Pp_X$ and the corresponding expectation is denoted by $\Ee_X$. In a similar way $\Pp_{(X,N)}=\Pp_X\otimes\Pp_N$ and $\Ee_{(X,N)}$ denote conditional probability and conditional expectation if $\unif$ is given.  As usual, the initial (time-zero) value of the process under consideration is given as a superscript. Note that $\hat{U}_t = (\hat{Y}_t,\hat{S}_t)$ is a function of $X$ and does not depend on $N$ or $\unif$. In particular, $\hat{U}_t$ and $\sigma_1$ are independent. Since $\sigma_1$ is the time of the first jump of the Poisson process $N(t)$, it is exponentially distributed with parameter $1$.  It follows that
\begin{align*}
    &|\Ee^u[F,A \setminus B]|\\
    &\le  \frac{2 \|f\|_{\infty}}{h} \, \Pp^u \left[{\textstyle \int_0^h \|g(\hat{U}_r,\cdot)\|_1 \, dr } \ge \sigma_1 > {\textstyle \int_0^h \|g(u,\cdot)\|_1 \, dr}\right] \\
    &= \frac{2 \|f\|_{\infty}}{h} \, \Ee_X^u \left[e^{-\int_0^h \|g(u,\cdot)\|_1 \, dr} - e^{- \int_0^h \|g(\hat{U}_r,\cdot)\|_1 \, dr};\: {\textstyle \int_0^h \|g(\hat{U}_r,\cdot)\|_1 \, dr  > \int_0^h \|g(u,\cdot)\|_1 \, dr}\right] \\
    &\le \frac{2 \|f\|_{\infty}}{h} \Ee_X^u \left| e^{-\int_0^h \|g(u,\cdot)\|_1 \, dr} - e^{- \int_0^h \|g(\hat{U}_r,\cdot)\|_1 \, dr}\right| \\
    &\leq \frac{2 \|f\|_{\infty}}{h}\, \Ee_X^u \left| \int_0^h \left(\|g(u,\cdot)\|_1  -  \|g(\hat{U}_r,\cdot)\|_1 \right) dr\right| \\
    &\le 2 \|f\|_{\infty}\, \Ee_X^u \sup_{0 \le r \le h} \left| \|g(u,\cdot)\|_1  -  \|g(\hat{U}_r,\cdot)\|_1 \right|.
\end{align*}
For the penultimate inequality we used the elementary estimate $|e^{-a} - e^{-b}| \le |a - b|$, $a,b \ge 0$. From Lemma \ref{norm} we infer that the last expression is bounded by
\begin{align*}
    2 \|f\|_{\infty}\, c\, & \Ee_X^u\left[\sup_{0 \le r \le h} \big(|\hat{U}_r - u| \wedge 1\big)\right] \\
    &= 2 \|f\|_{\infty} \,c\, \Ee_X^u\left[\sup_{0 \le r \le h} \left(\left|\left(X_r,\int_0^r  W(\hat{Y}_t)  \, dt\right)\right| \wedge 1\right)\right]\\
    &\xrightarrow[h\to 0^+]{} 0
\end{align*}
uniformly for all  $(y,s) \in \R \times [-M_1,M_1]$. This convergence follows from the right-continuity of $X_r$ and the fact that $|\int_0^r  W(\hat{Y}_t)  \, dt| \le h \|W\|_{\infty}$.

A similar argument shows that
$\displaystyle |\Ee^u[F;B \setminus A]| \xrightarrow[h\to 0^+]{} 0$ uniformly in $(y,s) \in \R \times [-M_1,M_1]$.
It will suffice to show that $\Ee^u[F;B] \to Rf$.

We have
\begin{align*}
    \Ee^u[F;B]
    &= \frac{1}{h}\, \Ee^{(y,s)}\left[  f(y + \eta_1(\hat{U}_{(\tau_1 \wedge h)-}),s)  - f(y,s); \int_0^h \|g(u,\cdot)\|_1 \, dr \ge \sigma_1 \right] \\
    &= \frac{1}{h}\, \Ee^{(y,s)}\left[
     f(y + \eta_1(\hat{U}_{(\tau_1 \wedge h)-}),s) - f(y + \eta_1(u),s); \int_0^h \|g(u,\cdot)\|_1 \, dr \ge \sigma_1 \right] \\
    &\qquad+ \frac{1}{h} \Ee^{(y,s)}\left[ f(y + \eta_1(u),s) - f(y,s); \int_0^h \|g(u,\cdot)\|_1 \, dr \ge \sigma_1 \right] \\
    &= \textsf{A} + \textsf{B}.
\end{align*}

In order to deal with $\textsf{A}$ and $\textsf{B}$ we introduce the following auxiliary notation.

Recall that $X$, $N$ and $\unif$ are independent. As before let $\Ee_{(X,N)}^{(y,s)}$ be the conditional expectation given $\unif$; the superscript $(y,s)$ indicates that $Y_0 = y$ and $S_0 = s$.  Moreover, $\Ee_\unif$ denotes conditional expectation given $X$ and $N$.

\begin{lemma}\label{jump}
    Let $u_1 = (y_1,s_1) \in \R^2$, $u_2 = (y_2,s_2) \in \R^2$  be such that $s_1,s_2 \in [- M, M]$ and  $\|g(u_2,\cdot)\|_1 > 0$. Then we have
    $$
        |\Ee_\unif(f(y + \eta_1(u_1),s) - f(y + \eta_1(u_2),s))| \le c \left(\frac{|u_1 - u_2|}{\|g(u_2,\cdot)\|_1} \wedge 1\right),
    $$
    for some $c = c(f, M) > 0$.
\end{lemma}
\begin{proof}
We will distinguish two cases: $\|g(u_1,\cdot)\|_1 = 0$ and $\|g(u_1,\cdot)\|_1> 0$.

Assume that $\|g(u_1,\cdot)\|_1 = 0$.
Then by
 Lemma \ref{norm} we have
$$
    \|g(u_2,\cdot)\|_1 = \left|\|g(u_2,\cdot)\|_1 - \|g(u_1,\cdot)\|_1\right| \le c |u_2 - u_1|.
$$
Hence,
$$
    |\Ee_\unif(f(y + \eta_1(u_1),s) - f(y + \eta_1(u_2),s))| \le 2 \|f\|_{\infty}
    \le \frac{2\, \|f\|_{\infty} \,c\, |u_1 - u_2|}{\|g(u_2,\cdot)\|_1}.
$$

Now we will consider the second case: $\|g(u_1,\cdot)\|_1> 0$. We have
\begin{align*}
    &\big|\Ee_\unif(f(y + \eta_1(u_1),s) - f(y + \eta_1(u_2),s))\big| \\
    &= \left|\int_{-\pi}^{\pi} \frac{f(y+x,s)}{\|g(u_1,\cdot)\|_1} g(u_1,x) \, dx - \int_{-\pi}^{\pi} \frac{f(y+x,s)}{\|g(u_2,\cdot)\|_1} g(u_2,x) \, dx\right| \\
    &\le \frac{\displaystyle \left|\int_{-\pi}^{\pi} f(y + x,s) \big[g(u_1,x)\|g(u_2,\cdot)\|_1 - g(u_2,x)\|g(u_1,\cdot)\|_1\big]\, dx\right|}{\|g(u_1,\cdot)\|_1 \|g(u_2,\cdot)\|_1}  \\
    &\le \frac{\|f\|_{\infty}\, }{\|g(u_1,\cdot)\|_1 \|g(u_2,\cdot)\|_1} \left[\int_{-\pi}^{\pi}  \big|g(u_1,x) \|g(u_2,\cdot)\|_1 - g(u_1,x)\|g(u_1,\cdot)\|_1\big| \, dx \right. \\
    &\qquad \left. + \int_{-\pi}^{\pi}  \big|g(u_1,x) \|g(u_1,\cdot)\|_1 - g(u_2,x)\|g(u_1,\cdot)\|_1\big| \, dx\right]
\end{align*}
By Lemmas \ref{functiong} and \ref{norm} this is bounded from above by
\begin{gather*}
    \frac{\|f\|_{\infty}}{\|g(u_1,\cdot)\|_1 \|g(u_2,\cdot)\|_1} \left[\int_{-\pi}^{\pi} g(u_1,x) \, dx\,
    c' |u_2 - u_1| + 2 \pi c'' \, |u_1 - u_2|\, \|g(u_1,\cdot)\|_1 \right] \\
    \qquad\le \frac{(c' + 2 \pi c'')\, \|f\|_{\infty} \, |u_2 - u_1|}{\|g(u_2,\cdot)\|_1}.
\end{gather*}
The lemma follows now from the  observation   that
\begin{gather*}
    |\Ee_\unif(f(y + \eta_1(u_1),s)) - f(y + \eta_1(u_2),s))| \le 2 \|f\|_{\infty}.
\qedhere
\end{gather*}
\end{proof}

\begin{allowdisplaybreaks}
\bigskip\noindent
\emph{Proof of Proposition \ref{genYS1} (continued):}
We go back to $\textsf{A} + \textsf{B}$. If $\|g(u,\cdot)\|_1 = 0$ then $\textsf{A} + \textsf{B} = 0 = R f(y,s)$.
The proof of the proposition is complete in this case.

We will consider the case $\|g(u,\cdot)\|_1 > 0$. Because of the independence of $\sigma_1$,  $X_t$  and $(\eta_1(u))_{u \in \Rt}$ we get
\begin{equation*}
    |\textsf{A}|
    = \left|\frac{1}{h} \Ee_{(X,N)}^u\left[ \Ee_\unif \left[  f(y + \eta_1(\hat{U}_{(\tau_1 \wedge h)-}),s)  - f(y + \eta_1(u),s)\right];\: h \|g(u,\cdot)\|_1 \ge \sigma_1 \right] \right|.
\end{equation*}
By Lemma \ref{jump} this is bounded from above by
\begin{align*}
    &\Bigg| \frac{1}{h} \, \Ee_{(X,N)}^u\left[ c \left(\frac{|\hat{U}((\tau_1 \wedge h)-) - u |}{\|g(u,\cdot)\|_1} \wedge 1 \right); \: h \|g(u,\cdot)\|_1 \ge \sigma_1 \right] \Bigg| \\
    &\le \left| \frac{c}{h \|g(u,\cdot)\|_1}\, \Ee_{(X,N)}^u\left[ \sup_{0 \le r \le h}|\hat{U}_r - u| \wedge \|g(u,\cdot)\|_1;\: h \|g(u,\cdot)\|_1 \ge \sigma_1 \right] \right| \\
    &= \left| \frac{c}{h \|g(u,\cdot)\|_1}\, \Ee_{(X,N)}^u\left[ \sup_{0 \le r \le h} \left|\left( X_r, \int_0^r W(\hat{Y}_t) \, dt\right)\right| \wedge \|g(u,\cdot)\|_1;\: h \|g(u,\cdot)\|_1 \ge \sigma_1 \right] \right| \\
    &\le \left| \frac{c}{h \|g(u,\cdot)\|_1}\, \Ee_{(X,N)}^u\left[ \sup_{0 \le r \le h} \left|\left( X_r, h \|W\|_{\infty}\right)\right| \wedge \|g(u,\cdot)\|_1 ; h \|g(u,\cdot)\|_1 \ge \sigma_1 \right] \right|.
\end{align*}
\end{allowdisplaybreaks}
Using the independence of  $X$  and $\sigma_1$ this is equal to
\begin{align*}
    \frac{c}{h \|g(u,\cdot)\|_1} & \left(1 - e^{- h \|g(u,\cdot)\|_1}\right)
     \Ee_{X}^u  \left[ \sup_{0 \le r \le h} \left|\left( X_r, h \|W\|_{\infty}\right)\right| \wedge \|g(u,\cdot)\|_1  \right] \\
    &\le c\,  \Ee_{X}^u  \left[ \sup_{0 \le r \le h} \left|\left( X_r, h \|W\|_{\infty}\right)\right| \wedge \|g(u,\cdot)\|_1  \right]\\
    &\xrightarrow[h\to 0^+]{} 0
\end{align*}
uniformly for all  $u = (y,s) \in \R \times [-M_1,M_1]$.

It will suffice to show that $\textsf{B} \xrightarrow[h\to 0^+]{} Rf$. Because of the independence of $\eta_1$ and $\sigma_1$ we get
\begin{align}\notag
    \textsf{B}
    &= \Ee_\unif\left[ f(y + \eta_1(u),s) - f(y,s)\right] \, \frac{1}{h} \left(1 - e^{- h \|g(u,\cdot)\|_1}\right)\\\notag
    &= \int_{-\pi}^{\pi} (f(y+x,s) - f(y,s)) ((W(y) - W(y + x))s)_+ \, dx \, \frac{1 - e^{- h \|g(u,\cdot)\|_1}}{h \|g(u,\cdot)\|_1} \\ \notag
    &= R f(y,s) \frac{1 - e^{- h \|g(u,\cdot)\|_1}}{h \|g(u,\cdot)\|_1} \\
    \label{lineR}
    &= R f(y,s)  + R f(y,s) \left( \frac{1 - e^{- h \|g(u,\cdot)\|_1}}{h \|g(u,\cdot)\|_1} - 1 \right).
\end{align}
For $u = (y,s) \in \R \times [-M_1,M_1]$ we have
\begin{gather*}
    |R f(y,s)|
        \le 2 \|f\|_{\infty}\, 2 \pi\, 2\, \|W\|_{\infty} \, M_1
        = 8 \pi\, \|f\|_{\infty} \|W\|_{\infty}\, M_1,\\[\medskipamount]
    \|g(u,\cdot)\|_1
        \le 2 \pi\, 2\, \|W\|_{\infty}\, M_1
        = 4 \pi \,\|W\|_{\infty}\, M_1.
\end{gather*}
Note that for any $h, c > 0$ we have
\begin{equation*}
    -\frac{h c}{2} \le \frac{1 - e^{- hc } - hc}{h c} \le 0.
\end{equation*}
Therefore,
$$
    \left| \frac{1 - e^{- h \|g(u,\cdot)\|_1}}{h \|g(u,\cdot)\|_1} - 1 \right|
    \le
    \frac{h\, \|g(u,\cdot)\|_1}{2} \le \frac{4 \pi \,\|W\|_{\infty}\, M_1}{2} h.
$$
It follows that the expression in \eqref{lineR} tends to $R f(y,s)$ when ${h \to 0^+}$ uniformly for all  $u = (y,s) \in \R \times [-M_1,M_1]$.
We have shown that $\textsf{B} \xrightarrow[ h\to0^+ ]{} Rf$. This was the last step in the proof.
\end{proof}

We will now introduce some  further  notation. Let $\N$ be  the  positive integers and  $\N_0=\N\cup\{0\}$. For any $f: \uni \to \R$ we set
$$
    \tilde{f}(x) := f(e^{ix}), \quad x \in \R.
$$
We say that $f: \uni \to \R$ is \emph{differentiable} at $z= e^{ix}$, $x \in \R$, if and only if $\tilde{f}$ is differentiable at $x$ and we put
$$
    f'(z) := (\tilde{f})'(x), \quad \text{where} \quad  z = e^{ix}, \quad x \in \R.
$$
Analogously, we say that $f: \uni \to \R$ is \emph{$n$ times differentiable}  at $z= e^{ix}$, $x \in \R$, if and only if $\tilde{f}$ is $n$ times differentiable at $x$ and we write
$$
    f^{(n)}(z) = (\tilde{f})^{(n)}(x), \quad \text{where} \quad  z = e^{ix}, \quad x \in \R.
$$
In a similar way we define for $f: \uni \times \R \to \R$
\begin{equation}\label{ftilde2}
    \tilde{f}(y,s) = f(e^{iy},s), \quad y,s \in \R.
\end{equation}
We say that $D^\alpha f(z,s)$, $z = e^{iy}$, $y,s \in \R$, $\alpha\in\N_0^2$, exists if and only if $D^\alpha\tilde{f}(y,s)$ exists and we set
$$
    D^\alpha f(z,s) = D^\alpha\tilde{f}(y,s), \quad \text{where} \quad  z = e^{iy}, \quad  y,s \in \R.
$$
When writing $C^2(\uni)$, $C_c^2(\uni \times \R)$, etc., we are referring to the derivatives defined above.

\begin{proof}[Proof of Corollary \ref{genZS1}]
We will use the notation $\tilde{f}$ introduced in \eqref{ftilde2}. Let $f \in C_c(\uni \times \R)$. Then $\tilde{f} \in C_*(\Rt)$. Let $z = e^{iy}$, $z \in \uni$, $s \in \R$.  We have, cf.\ \cite[eq.\ (2.9)]{BKS},
\begin{align}\label{quotient1}
     \frac{T_t^\uni f(z,s) - f(z,s)}{t}
    &=  \frac{T_t \tilde{f}(y,s) - \tilde{f}(y,s)}{t},\\
\label{quotient2}
     \frac{\hat{T}_t^\uni f(z,s) - f(z,s)}{t}
    &=  \frac{\hat{T}_t \tilde{f}(y,s) - \tilde{f}(y,s)}{t}.
\end{align}
Using this and Proposition \ref{genYS1} we get that $\lim_{t \to 0^+} (T_t^\uni f - f)/t$ exists if and only if $\lim_{t \to 0^+} (\hat{T}_t^\uni f - f)/t$ exists, where both limits are in $||\cdot||_{\infty}$ norm.  Consequently,
$$
    f \in \calD(\calG) \cap C_c(\uni \times \R)
    \iff
    f \in \calD(\hat{\calG}) \cap C_c(\uni \times \R).
$$

The second assertion of the proposition follows
 from \eqref{new-eq}, the definition of the infinitesimal generator and from the fact that for  $z \in \uni$ and $s \in \R$
\begin{equation}\label{RRD}
    R \tilde{f}(y,s) = R^\uni f(z,s), \quad z = e^{iy}.
\qedhere
\end{equation}
\end{proof}

\begin{proof}[Proof of Proposition \ref{genYS2}]
Note that \eqref{formulaYS2a} follows from \eqref{formulaYS2b} by Proposition \ref{genYS1}. So it is sufficient to show \eqref{formulaYS2b}.

Pick  $f \in C_*^2(\Rt)$.  Throughout this proof we assume that $\supp(f) \subset \R \times (-M_0,M_0)$ for some $M_0 > 0$. With exactly the same argument as at the beginning of the proof of Proposition \ref{genYS1}, we can restrict our attention to $(y,s) \in \R \times [-M_1,M_1]$ where $M_1 := M_0 + \|W\|_\infty$. We have for $0 < h < 1$,
\begin{align*}
  &\frac{\hat T_h
 f(y,s) - f(y,s)}{h}\\
    &\qquad = \frac{\Ee^{(y,s)}f(\hat{Y}_h,\hat{S}_h) - \Ee^{(y,s)}f(\hat{Y}_h,s)}{h} + \frac{\Ee^{(y,s)}f(\hat{Y}_h,s) - \Ee^{(y,s)}f(y,s)}{h}\\
    &\qquad = \text{I} + \text{II}.
\end{align*}
We get
\begin{align*}
    \text{I}
    &= \frac{1}{h} \Ee^{(y,s)}\left[ \frac{\partial f}{\partial s} (\hat{Y}_h,\xi) (\hat{S}_h - s)\right]\\
    &= \frac{1}{h} \Ee^{(y,s)} \left[\frac{\partial f}{\partial s} (\hat{Y}_h,\xi)  \int_0^h W(\hat{Y}_t) \, dt\right] \\
    &= \Ee^{(y,s)}\left[ \frac{1}{h} \frac{\partial f}{\partial s} (y,s)  \int_0^h W(y) \, dt\right]
    + \Ee^{(y,s)} \left[\frac{1}{h} \left[\frac{\partial f}{\partial s} (\hat{Y}_h,\xi) - \frac{\partial f}{\partial s} (y,s) \right] \int_0^h W(y) \, dt\right] \\
    &\qquad+   \Ee^{(y,s)} \left[\frac{1}{h} \frac{\partial f}{\partial s} (\hat{Y}_h,\xi) \int_0^h
\left( W(\hat{Y}_t) - W(y) \right)
 \, dt\right]\\
    &= \text{I}_1 + \text{I}_2 + \text{I}_3,
\end{align*}
where $\xi$ is a point between $s$ and $\hat{S}_h$. Note that $|\hat{Y}_h - y| = |X_h|$ and $|\xi - s| \le |\hat{S}_h -s| \le h \|W\|_{\infty}$. Moreover,
\begin{align*}
    |W(\hat{Y}_h) - W(y)|
    &\le \big(2\, \|W\|_{\infty}\big) \wedge \big(\|W'\|_{\infty} |\hat{Y}_h -y|\big)\\
    &\le c\, (|X_h| \wedge 1)\\
    &\le c\, \left(\sup_{0 \le t \le h}|X_t| \wedge 1 \right)
\end{align*}
and
\begin{align*}
    \bigg|\frac{\partial f}{\partial s} & (\hat{Y}_h,\xi) - \frac{\partial f}{\partial s} (y,s) \bigg|\\
    &\le 2\, \left\|\frac{\partial f}{ \partial s}\right\|_{\infty} \wedge \left[ \left(\left\|\frac{\partial^2 f}{ \partial s^2}\right\|_{\infty} + \left\|\frac{\partial^2 f}{ \partial s\, \partial y}\right\|_{\infty}\right) (|\hat{Y}_h - y| + |\xi - s|) \right] \\
    &\le c \,((|X_h| + h) \wedge 1),
\end{align*}
where $c = c(W,f)$. It follows that
$$
    |\text{I}_2| \le c \,\|W\|_{\infty} \, \Ee^{(y,s)}\big((|X_h| + h) \wedge 1\big) \xrightarrow[h\to 0^+]{} 0,
$$
uniformly for all  $u = (y,s) \in \R \times [-M_1,M_1]$. In a similar way
$$
    |\text{I}_3| \le c \left\|\frac{\partial f}{ \partial s}\right\|_{\infty} \, \Ee^{(y,s)}\left[\sup_{0 \le t \le h}|X_t| \wedge 1 \right]
    \xrightarrow[h\to 0^+]{} 0,
$$
uniformly for all  $u = (y,s) \in \R \times [-M_1,M_1]$. So
$$
    \text{I} \xrightarrow[h\to 0^+]{} \frac{\partial f}{\partial s}(y,s) W(y)
$$
uniformly for all  $u = (y,s) \in \R \times [-M_1,M_1]$.

It
 is well known that
$$
    \text{II}
    = \frac{\Ee^{(y,s)}(f(y + X_h,s) - f(y,s))}{h} \xrightarrow[h\to 0^+]{}
    -(-\Delta_y)^{\alpha/2} f(y,s)
$$
uniformly in $u = (y,s)$.

Combining  the  estimates for I and II shows that $f \in \calD(\calG^{(\hat{Y},\hat{S})})$ and that \eqref{formulaYS2b} holds.
\end{proof}

\begin{proof}[Proof of Corollary \ref{genZS2}]
Let $f \in C_c^2(D \times \R)$. Then $\tilde{f} \in C_*^2(\R^2) \subset \calD(\calG^{(Y,S)})$. By (\ref{quotient1}) $f \in \calD(\calG)$. Now let $z = e^{iy}$, $z \in D$, $s \in \R$. By (\ref{quotient1}), Proposition \ref{genYS2}, (\ref{Lz3}) and (\ref{RRD}) we get
\begin{eqnarray*}
\calG f(z,s) &=& \calG^{(Y,S)} \tilde{f}(y,s) \\
&=& -(-\Delta_y)^{\alpha/2} \tilde{f}(y,s) + R\tilde{f}(y,s) + W(y) \tilde{f}_s(y,s) \\
&=& L_z f(z,s) + R^Df(z,s) + V(z) f_s(z,s).
\end{eqnarray*}

The proof for $\hat{\calG}$ is the same.
\end{proof}

\section{Stationary measure}\label{sec3}
The aim of this section is to show that the process $(Z_t,S_t)$ has a unique stationary measure.
First we will show that $C_c^2(\uni \times \R)$ is a core for $(\calG, \calD(\calG))$. For this we will need two auxiliary lemmas.

\begin{lemma} \label{core1}
    $C_c^2(\uni \times \R)$ is a core for $\hat{\calG}$.
\end{lemma}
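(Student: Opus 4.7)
My plan is to invoke a standard core criterion: a subspace $D\subset\calD(\hat\calG)$ which is dense in $C_0(\uni\times\R)$ and invariant under the semigroup, i.e.\ $\hat T_t^\uni D \subset D$ for every $t\geq 0$, is automatically a core for $\hat\calG$. I would take $D = C_c^2(\uni\times\R)$. The inclusion $D\subset\calD(\hat\calG)$ has already been established in Corollary~\ref{genZS2}, and density of $C_c^2(\uni\times\R)$ in $C_0(\uni\times\R)$ is routine (smooth mollification together with a compactly supported cutoff in the $s$-variable; no cutoff is needed in $z$ since $\uni$ is compact).

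The substantive task is therefore the semigroup invariance $\hat T_t^\uni\big(C_c^2(\uni\times\R)\big)\subset C_c^2(\uni\times\R)$. For $f\in C_c^2(\uni\times\R)$, I would set $\tilde f(y,s):=f(e^{iy},s)$, so that $\tilde f\in C^2(\R^2)$ is $2\pi$-periodic in $y$ and compactly supported in $s$, and start from the explicit representation
$$
\hat T_t^\uni f(e^{iy},s)=\Ee\,\tilde f\Big(y+X_t,\;s+\int_0^t W(y+X_r)\,dr\Big).
$$
Since $\big|\int_0^t W(y+X_r)\,dr\big|\leq t\|W\|_\infty$, if $\tilde f$ vanishes for $|s|>M$ then $\hat T_t^\uni f$ vanishes for $|s|>M+t\|W\|_\infty$, so compactness of the $s$-support is preserved.

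For the $C^2$-regularity in $(y,s)$ I would differentiate under the expectation. The $s$-derivatives commute trivially with $\Ee$. The $y$-derivatives generate, via the chain rule, additional factors
$$
\int_0^t W'(y+X_r)\,dr \quad\text{and}\quad \int_0^t W''(y+X_r)\,dr
$$
entering through the $s$-slot of $\tilde f$. Because $V\in C^3(\uni)$ the functions $W,W',W''$ are bounded on $\R$, so every partial derivative of the integrand up to order two is dominated by a deterministic constant depending only on $t$, $\|W\|_{C^2}$ and $\|\tilde f\|_{C^2}$; dominated convergence then yields both existence and joint continuity of the derivatives, so $\hat T_t^\uni f\in C_c^2(\uni\times\R)$.

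The main technical obstacle is the second-order chain-rule bookkeeping, in particular the cross term $\partial_s^2\tilde f\cdot\big(\int_0^t W'(y+X_r)\,dr\big)^2$ and the term $\partial_s\tilde f\cdot\int_0^t W''(y+X_r)\,dr$ produced by $\partial_y^2$. However, because the ``stochastic flow'' on the $y$-side reduces here to the trivial translation $y\mapsto y+X_t$, the analysis ultimately boils down to deterministic $L^\infty$-estimates on $W,W',W''$, all of which are finite under the standing hypothesis $V\in C^3(\uni)$. Once invariance and density are established, the abstract core criterion immediately yields the claim.
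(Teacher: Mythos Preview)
Your proposal is correct and follows the same strategy as the paper: invoke the core criterion \cite[Proposition~1.3.3]{EK} after verifying density, the inclusion $C_c^2(\uni\times\R)\subset\calD(\hat\calG)$, and the invariance $\hat T_t^\uni\big(C_c^2(\uni\times\R)\big)\subset C_c^2(\uni\times\R)$. The only difference is that the paper outsources the $C^2$-regularity of $\hat T_t\tilde f$ to the stochastic-flow smoothness result \cite[Theorem~3.1]{BKS} from its companion paper, whereas you establish it directly by differentiating under the expectation---which is indeed elementary here because the $y$-flow is the trivial translation $y\mapsto y+X_t$ and all resulting factors are deterministically bounded by $\|W\|_{C^2}$ and $\|\tilde f\|_{C^2}$.
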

\begin{proof}
Here we will use the results from \cite{BKS}. Note that $(\hat{Y}_t,\hat{S}_t)$ is the solution of a SDE of the form  (3.1) in \cite{BKS}. Since $V:\uni \to \R$ is a $C^3$ function,  \cite[Theorem 3.1]{BKS}, see also \cite[Proposition 3.6]{BKS}, guarantees that $\hat{T}_tf \in C_*^2(\Rt)$ for all $f \in C_*^2(\Rt)$.

Now let $f\in C_c^2(\uni \times \R)$. Then $\tilde{f} \in C_*^2(\Rt)$ and $\hat{T}_t \tilde{f} \in C_*^2(\Rt)$. For $z = e^{iy}$, $z \in \uni$, $s \in \R$ we get as in \cite[eq.\ (2.9)]{BKS}  $\hat{T}_t^\uni f(z,s) = \hat{T}_t \tilde{f}(y,s)$. Hence, $\hat{T}_t^\uni f \in C_c^2(\uni \times \R)$. This means that $\hat{T}_t^\uni: C_c^2(\uni \times \R) \to C_c^2(\uni \times \R)$. Since $C_c^2(\uni \times \R)$ is dense in $C_0(\uni \times \R)$---the Banach space where the semigroup $\{\hat{T}_t^\uni\}_{t \ge 0}$ is defined---\cite[Proposition 1.3.3]{EK} applies and shows that $C_c^2(\uni \times \R)$ is a core for $(\hat{\calG}, \calD(\hat\calG))$.
\end{proof}

\begin{lemma}\label{core2}
    $C_c(\uni \times \R) \cap \calD(\calG) = C_c(\uni \times \R) \cap \calD(\hat{\calG})$ is a core for $(\calG,\calD(\calG))$ and $(\hat{\calG},\calD(\hat{\calG}))$.
\end{lemma}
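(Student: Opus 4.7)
The equality of the two spaces is a direct restatement of Corollary \ref{genZS1}, so nothing beyond citing it is required. For the two ``core'' assertions, the plan is to identify a reservoir of smooth, compactly supported test functions already known to be a core and then use a semigroup-invariance argument to upgrade to the larger intersection $C_c(\uni\times\R)\cap\calD(\calG)$.

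For $\hat\calG$, Lemma \ref{core1} tells us that $C_c^2(\uni\times\R)$ is already a core, and Corollary \ref{genZS2} gives $C_c^2(\uni\times\R)\subset\calD(\hat\calG)$, so $C_c^2(\uni\times\R)\subset C_c(\uni\times\R)\cap\calD(\hat\calG)\subset\calD(\hat\calG)$. Since enlarging a core inside the domain can only enlarge the graph closure, the intersection $C_c(\uni\times\R)\cap\calD(\hat\calG)$ is automatically a core for $\hat\calG$. That takes care of (c).

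For $\calG$, I plan to invoke \cite[Proposition 1.3.3]{EK}: if $\mathcal{C}\subset\calD(\calG)$ is dense in the Banach space $C_0(\uni\times\R)$ on which $T_t^\uni$ acts and $T_t^\uni(\mathcal{C})\subset\mathcal{C}$ for every $t\ge 0$, then $\mathcal{C}$ is a core. Take $\mathcal{C}:=C_c(\uni\times\R)\cap\calD(\calG)$. Density in $C_0(\uni\times\R)$ follows from $C_c^2(\uni\times\R)\subset\mathcal{C}$ (Corollary \ref{genZS2}) and the standard density of $C_c^2$ in $C_0$. Invariance of $\calD(\calG)$ under $T_t^\uni$ is immediate from general semigroup theory, so only the invariance of the compact-support condition requires verification, and this is the key step of the argument.

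For this step I would use the a priori bound on the memory coordinate. Since $S_t=s+\int_0^t W(Y_r)\,dr$, we have $|S_t-s|\le t\|W\|_\infty$ pathwise, uniformly in the starting point and over all the constructed excursions. Consequently, if $f\in C_c(\uni\times\R)$ is supported in $\uni\times[-M,M]$, then $T_t^\uni f(z,s)=\Ee^{(z,s)}f(Z_t,S_t)$ vanishes whenever $|s|>M+t\|W\|_\infty$, so $T_t^\uni f$ has compact support in $\uni\times[-M-t\|W\|_\infty,M+t\|W\|_\infty]$. Combined with the Feller/continuity property of $T_t^\uni$ on $C_0(\uni\times\R)$, this gives $T_t^\uni(C_c(\uni\times\R))\subset C_c(\uni\times\R)$, hence $T_t^\uni(\mathcal{C})\subset\mathcal{C}$. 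Applying \cite[Proposition 1.3.3]{EK} finishes (b).

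The only real subtlety I expect is the continuity part of the invariance $T_t^\uni(C_c)\subset C_c$, i.e.\ the Feller property on $C_0(\uni\times\R)$; the quantitative support bound via $|S_t-s|\le t\|W\|_\infty$ is essentially automatic from the construction \eqref{rep12}. The remaining ingredients (density of $C_c^2$ in $C_0$, preservation of $\calD(\calG)$ by $T_t^\uni$, and the fact that any superset of a core that lies inside the domain is itself a core) are standard.
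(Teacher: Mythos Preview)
Your proposal is correct and follows essentially the same strategy as the paper: establish that $T_t^\uni$ (and $\hat T_t^\uni$) preserve compact support via the pathwise bound $|S_t-s|\le t\|W\|_\infty$, combine this with density of $C_c^2(\uni\times\R)$ in $C_0(\uni\times\R)$, and then invoke \cite[Proposition~1.3.3]{EK}. The only minor difference is that for $\hat\calG$ you take the shortcut of enlarging the known core $C_c^2(\uni\times\R)$ from Lemma~\ref{core1} inside $\calD(\hat\calG)$, whereas the paper applies \cite[Proposition~1.3.3]{EK} uniformly to both semigroups; both arguments are valid and equally short.
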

\begin{proof}
The equality
of the two families of functions
follows from Corollary \ref{genZS1}.

By Corollary \ref{genZS2}, $C_c^2(\uni \times \R) \subset C_c(\uni \times \R) \cap \calD(\calG)$ and $C_c^2(\uni \times \R)$ is dense in $C_0(\uni \times \R)$ where the semigroups $\{T_t^\uni\}_{t \ge 0}$, $\{\hat{T}_t^\uni\}_{t \ge 0}$ are defined; so $C_c(\uni \times \R) \cap \calD(\calG)$ is dense in $C_0(\uni \times \R)$.

By the definition of the processes $S_t$ and $\hat{S}_t$ and the boundedness of $W$ it is easy to see that $T_t^\uni: C_c(\uni \times \R) \to C_c(\uni \times \R)$ and $\hat{T}_t^\uni: C_c(\uni \times \R) \to C_c(\uni \times \R)$. It follows that $T_t^\uni$ and $\hat{T}_t^\uni$ map $C_c(\uni \times \R) \cap \calD(\calG)$ into itself. Now \cite[Proposition 1.3.3]{EK} gives that $C_c(\uni \times \R) \cap \calD(\calG)$ is a core for $\calG$ and $\hat{\calG}$.
\end{proof}

\begin{proposition}
\label{core3}
    $C_c^2(\uni \times \R)$ is a core for $\calG$.
\end{proposition}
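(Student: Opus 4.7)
The strategy is to combine Lemma \ref{core1} and Lemma \ref{core2} with a cutoff in the $s$-variable. By Corollary \ref{genZS2} we already know $C_c^2(\uni \times \R) \subset C_c(\uni \times \R) \cap \calD(\calG)$, and by Lemma \ref{core2} the latter set is a core for $\calG$. Hence it suffices to show that every $f \in C_c(\uni \times \R) \cap \calD(\calG)$ can be approximated in the $\calG$-graph norm by functions from $C_c^2(\uni \times \R)$.

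Fix such $f$ and pick $M>0$ with $\supp f \subset \uni \times [-M,M]$. By Lemma \ref{core2}, $f$ also lies in $\calD(\hat\calG)$, and since $C_c^2(\uni \times \R)$ is a core for $\hat\calG$ (Lemma \ref{core1}) I can choose $g_n \in C_c^2(\uni \times \R)$ with $g_n \to f$ and $\hat\calG g_n \to \hat\calG f$ uniformly. The supports of the $g_n$ need not stay inside $\uni \times [-M,M]$, so I truncate: pick $\chi \in C_c^2(\R)$ with $\chi \equiv 1$ on $[-M,M]$ and $\supp \chi \subset [-M-1,M+1]$, and set $f_n(z,s) := \chi(s) g_n(z,s) \in C_c^2(\uni \times \R)$. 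Then $\|f_n - f\|_\infty \le \|g_n - f\|_\infty \to 0$. Since the operator $L_z$ acts only on the $z$-variable, the product rule together with Corollary \ref{genZS2} gives
$$
    \hat\calG f_n = \chi\,\hat\calG g_n + V\chi'\,g_n.
$$
Because $\hat\calG f$ is supported in $\uni \times [-M,M]$ (the drift $\hat S_t$ moves by at most $t\|W\|_\infty$, so $\hat T_t f$ vanishes off $\uni \times [-M-t\|W\|_\infty,M+t\|W\|_\infty]$), we have $\chi\,\hat\calG f = \hat\calG f$; and because $f \equiv 0$ on $\supp \chi'$, $V\chi' g_n = V\chi'(g_n - f)$ is controlled by $\|V\|_\infty \|\chi'\|_\infty \|g_n - f\|_\infty$. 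Combining these observations yields $\hat\calG f_n \to \hat\calG f$ uniformly.

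Corollary \ref{genZS1} now gives $\calG f_n - \calG f = (\hat\calG f_n - \hat\calG f) + R^\uni(f_n - f)$, and the uniform bound $\supp(f_n - f) \subset \uni \times [-M-1,M+1]$ tames the factor $s$ in the $R^\uni$-kernel:
$$
    \|R^\uni(f_n - f)\|_\infty \le 8\pi\,\|V\|_\infty\,(M+1)\,\|f_n - f\|_\infty \to 0.
$$
Hence $\calG f_n \to \calG f$ in sup norm, which completes the proof. The main technical point is precisely the cutoff: without uniform $s$-support control on the approximating sequence, the factor $s$ in the jump kernel of $R^\uni$ would destroy uniform convergence, and the sup-norm approximation that Lemma \ref{core1} provides for $\hat\calG$ would not transfer to $\calG$.
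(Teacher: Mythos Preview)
Your proof is correct and follows essentially the same strategy as the paper: approximate $f\in C_c(\uni\times\R)\cap\calD(\calG)$ in the $\hat\calG$-graph norm using Lemma~\ref{core1}, then multiply by a cutoff in $s$ to control supports, and finally transfer to the $\calG$-graph norm via Corollary~\ref{genZS1} and the bounded-$s$ estimate for $R^\uni$. The paper does the same, only with a slightly wider cutoff ($\equiv 1$ on $[-M-1,M+1]$, supported in $[-M-3,M+3]$) and a case split on $|s|\le M$ versus $|s|>M$ in place of your product-rule identity $\hat\calG f_n=\chi\,\hat\calG g_n+V\chi'\,g_n$; these are cosmetic differences.
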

\begin{proof}
Pick $f \in \calD(\calG) \cap C_c(\uni \times \R)$. We have $f \in \calD(\hat{\calG}) \cap C_c(\uni \times \R)$ and $C_c^2(\uni \times \R)$ is a core for $\hat{\calG}$ so there exists a sequence $(f_n)_{n = 1}^{\infty}$, where $f_n \in C_c^2(\uni \times \R)$ such that
$$
    \lim_{n \to \infty} \left(\|f_n - f\|_{\infty} + \|\hat{\calG} f_n - \hat{\calG} f\|_{\infty} \right) = 0.
$$
Since $f \in C_c(\uni \times \R)$, there exists some $M > 0$ such that $\supp(f) \subset \uni \times [-M,M]$. Let $g \in C_c^{\infty}(R)$ be such that $0 \le g \le 1$, $\|g'\|_{\infty} \le 1$, $g \equiv 1$ on $[-M-1,M+1]$ and $g \equiv 0$ on $(-\infty,-M-3] \cup [M+3,\infty)$. Put
$$
    g_n(z,s) := g(s) f_n(z,s), \quad (z,s) \in \uni \times \R,
$$
and note that  $f(z,s)= g(s) f(z,s)$. Therefore
$$
    |g_n(z,s) - f(z,s)|
    = |g(s)f_n(z,s) - g(s)f(z,s)|
    \leq |f_n(z,s) - f(z,s)|
$$
and
$$
    \|g_n - f\|_{\infty} \le \|f_n - f\|_{\infty}.
$$
Since $g_n \in C_c^2(\uni \times \R) \subset \calD(\hat{\calG})$, we find for $(z,s) \in \uni \times [-M,M]$,
\begin{align*}
    |\hat{\calG} g_n(z,s) - \hat{\calG} f(z,s)|
    &= \left|V(z) \frac{\partial g_n}{\partial s}(z,s) + L_z g_n(z,s) - \hat{\calG}f(z,s)\right| \\
    &= \left|V(z) \frac{\partial f_n}{\partial s}(z,s) + L_z f_n(z,s) - \hat{\calG}f(z,s)\right| \\
    &=  |\hat{\calG} f_n(z,s) - \hat{\calG}f(z,s)|\\
    &\le \|\hat{\calG} f_n - \hat{\calG} f\|_{\infty},
\end{align*}
whereas for $(z,s) \notin \uni \times [-M,M]$,
$$
    \hat{\calG} f(z,s) = V(z) \frac{\partial f}{\partial s}(z,s) + L_z f(z,s) = 0,
$$
and
\begin{align*}
    |\hat{\calG} & g_n(z,s) - \hat{\calG} f(z,s)|\\
    &= \left|V(z) \frac{\partial g_n}{\partial s}(z,s) + L_z g_n(z,s) \right| \\
    &= \left|V(z) g'(s) f_n(z,s) + V(z) \frac{\partial f_n}{\partial s}(z,s) g(s)+ g(s) L_z f_n(z,s) \right| \\
    &\le |V(z)|\cdot |g'(s)|\cdot |f_n(z,s)| + |g(s)| \left|V(z)\, \frac{\partial f_n}{\partial s}(z,s) +  L_z f_n(z,s)\right| \\
    &\le \|V\|_{\infty} |f_n(z,s)| + |\hat{\calG} f_n(z,s)| \\
    &= \|V\|_{\infty} |f_n(z,s) - f(z,s)| + |\hat{\calG} f_n(z,s) - \hat{\calG} f(z,s)| \\
    &\le \|V\|_{\infty} \|f_n - f\|_{\infty}+ \|\hat{\calG} f_n - \hat{\calG} f\|_{\infty}.
\end{align*}
Hence
$$
    \|g_n - f\|_{\infty} + \|\hat{\calG} g_n - \hat{\calG} f\|_{\infty}
    \le
    (1 + \|V\|_{\infty}) \|f_n - f\|_{\infty} +\|\hat{\calG} f_n - \hat{\calG} f\|_{\infty}
$$
and we see that
$$
    \lim_{n \to \infty} \left(\|g_n - f\|_{\infty} + \|\hat{\calG} g_n - \hat{\calG} f\|_{\infty}\right) = 0.
$$

Note that for every $M > 0$ there exists a constant $C_{M,V} > 0$ such that
$$
    \|R^\uni h\|_{\infty} \le C_{M,V} \|h\|_{\infty},
$$
for all $h \in C_c(\uni \times \R)$ such that $\supp(h) \subset \uni \times [-M + 3, M + 3]$. Hence
\begin{align*}
    \|g_n - f &\|_{\infty} + \|\calG g_n - \calG f\|_{\infty}\\
    &= \|g_n - f\|_{\infty} + \|\hat{\calG} g_n - \hat{\calG} f + R^\uni g_n - R^\uni f\|_{\infty}\\
    &\le \|g_n - f\|_{\infty} +\|\hat{\calG} g_n - \hat{\calG} f \|_{\infty} + \|R^\uni g_n - R^\uni f\|_{\infty}\\
    &\le (1 + \|V\|_{\infty})\|f_n - f\|_{\infty} + \|\hat{\calG} f_n - \hat{\calG} f \|_{\infty} + C_{M,V}\|g_n - f\|_{\infty}\\
    &\le (1 + \|V\|_{\infty} + C_{M,V})\|f_n - f\|_{\infty}+ \|\hat{\calG} f_n - \hat{\calG} f \|_{\infty}\\
    &\xrightarrow[n\to\infty]{} 0.
\end{align*}
This shows that for every $f \in \calD(\calG) \cap C_c(\uni \times \R)$ there exists a sequence  $(g_n)_{n = 1}^{\infty}$, such that $g_n \in C_c^2(\uni \times \R)$ and
$$
    \|g_n - f\|_{\infty} + \|\calG g_n - \calG f\|_{\infty} \xrightarrow[n\to\infty]{} 0.
$$
Since we know that $\calD(\calG) \cap C_c(\uni \times \R)$ is a core for $(\calG,\calD(\calG))$, we conclude that $C_c^2(\uni \times \R)$ is also a core for $(\calG,\calD(\calG))$.
\end{proof}

We will now indentify the form of the stationary distribution of the process $(Z_t,S_t)$. For this we need two auxiliary results, Lemma \ref{Lemma2.8BKS} and Proposition \ref{stationary}. Since Lemma \ref{Lemma2.8BKS} is crucial for our argument we reproduce its short proof from \cite[Lemma 2.8]{BKS}.
\begin{lemma}\label{Lemma2.8BKS}
For any  $f \in C^2(\uni)$ we have
$$
    \int_\uni Lf(z) \, dz = 0.
$$
\end{lemma}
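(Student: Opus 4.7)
The plan is to lift the problem from the unit circle to the real line via the parameterization $z = e^{iy}$ and then exploit the $2\pi$-periodicity of the lift. Parameterizing arc length,
\[
\int_\uni Lf(z)\, dz = \int_0^{2\pi} Lf(e^{iy})\, dy,
\]
and the identity \eqref{Lz3} (which extends from $C_c^2(\uni\times\R)$ to $C^2(\uni)$ because a function in $C^2(\uni)$ is automatically bounded together with its derivatives on the compact circle) yields $Lf(e^{iy}) = -(-\Delta_y)^{\alpha/2}\tilde f(y)$, where $\tilde f(y) := f(e^{iy}) \in C^2(\R)$ is $2\pi$-periodic. It therefore suffices to prove
\[
\int_0^{2\pi} (-\Delta_y)^{\alpha/2} \tilde f(y)\, dy = 0.
\]

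Next I would invoke the symmetric-difference representation of the fractional Laplacian,
\[
-(-\Delta)^{\alpha/2}\tilde f(y) = \frac{\calA_\alpha}{2} \int_\R \frac{\tilde f(y+u) + \tilde f(y-u) - 2\tilde f(y)}{|u|^{1+\alpha}}\, du,
\]
which is valid pointwise for $\tilde f \in C_b^2(\R)$: the numerator is dominated by $(\|\tilde f''\|_\infty u^2) \wedge (4\|\tilde f\|_\infty)$, so the integrand is $\le c\,[(u^2 \wedge 1)/|u|^{1+\alpha}]$, a function that is integrable on $\R$ for every $\alpha \in (0,2)$ and independent of $y$. This uniform domination over $y\in[0,2\pi]$ is exactly what legitimizes applying Fubini to exchange the $dy$ and $du$ integrations in
\[
\int_0^{2\pi} -(-\Delta_y)^{\alpha/2}\tilde f(y)\, dy = \frac{\calA_\alpha}{2} \int_0^{2\pi}\int_\R \frac{\tilde f(y+u) + \tilde f(y-u) - 2\tilde f(y)}{|u|^{1+\alpha}}\, du\, dy.
\]

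After the swap, for each fixed $u \in \R$ the $2\pi$-periodicity of $\tilde f$ gives
\[
\int_0^{2\pi} \tilde f(y+u)\, dy = \int_0^{2\pi} \tilde f(y-u)\, dy = \int_0^{2\pi} \tilde f(y)\, dy,
\]
so the inner $dy$-integral vanishes identically in $u$, and the whole double integral is zero. There is no substantial obstacle here: the only care required is in checking the symmetric representation and the Fubini hypothesis for $\tilde f \in C^2(\R) \cap L^\infty(\R)$, both of which are immediate from the regularity of $f$ on the compact circle. An alternative, equally quick route would avoid \eqref{Lz3} entirely and instead apply Fubini directly to the two defining integrals in \eqref{Lz1}: swapping $w \leftrightarrow z$ reverses the sign of $f(w)-f(z)$ while preserving $|\Arg(w/z)|$ and pairing the $n$-th and $(-n)$-th terms, forcing $\int_\uni\int_\uni(\cdots)\,dw\,dz$ to equal its own negative. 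I would prefer the periodic-lift route since it makes the use of the $C^2$ regularity entirely transparent through the symmetric representation.
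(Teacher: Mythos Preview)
Your proof is correct, but it follows a genuinely different route from the paper's. The paper works directly with the definition \eqref{Lz1} on the circle: for the truncated integral and for the tail sum separately, it swaps $w\leftrightarrow z$, uses that $|\Arg(z/w)|=|\Arg(w/z)|$ (and that $|\Arg(z/w)+2n\pi|=|\Arg(w/z)-2n\pi|$, so the sum over $n\neq 0$ is invariant), and concludes that each double integral equals its own negative. The $C^2$ hypothesis enters only at the end, via a Taylor expansion showing $|L_\eps f(z)|\leq c(f,\alpha)$ uniformly in $\eps$, which justifies passing $\lim_{\eps\to 0^+}$ through $\int_\uni$ by bounded convergence. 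This is precisely the ``alternative route'' you sketch in your last paragraph.

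Your primary argument---lifting to $\R$, rewriting $-(-\Delta)^{\alpha/2}\tilde f$ via the symmetric second-difference representation, and then using Fubini plus $2\pi$-periodicity---has the advantage that the principal value disappears entirely: the symmetric integrand is absolutely integrable with a $y$-independent dominating function, so no separate limit-interchange step is needed. The paper's approach, on the other hand, stays closer to the operator's definition and avoids invoking the equivalence \eqref{Lz3} and the symmetric representation, at the cost of handling the $\eps\to 0$ limit explicitly. Both arguments use the $C^2$ regularity in the same way, to control the second-order difference near the diagonal.
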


\begin{proof}
Recall that $\Arg(z)$ denotes the argument of $z \in \C$ belonging to $(-\pi,\pi]$. First we will show that
\begin{equation}\label{intL}
    \iint_{\uni\times \uni} \I_{\{w\::\:|\Arg(w/z)| > \eps\}}(w) \, \frac{f(w) - f(z)}{|\Arg(w/z)|^{1 + \alpha}} \, dw \, dz = 0.
\end{equation}
We interchange $z$ and $w$, use Fubini's theorem and observe that $|\Arg(z/w)| = |\Arg(w/z)|$,
\begin{align*}
    \iint_{\uni\times \uni} &\I_{\{w\::\:|\Arg(w/z)| > \eps\}}(w) \, \frac{f(w) - f(z)}{|\Arg(w/z)|^{1 + \alpha}} \, dw \, dz\\
    &= \iint_{\uni\times \uni}  \I_{\{z\::\:|\Arg(z/w)| > \eps\}}(z)  \, \frac{f(z) - f(w)}{|\Arg(z/w)|^{1 + \alpha}} \, dz \, dw\\
    &= \iint_{\uni\times \uni}  \I_{\{z\::\:|\Arg(z/w)| > \eps\}}(z)  \, \frac{f(z) - f(w)}{|\Arg(z/w)|^{1 + \alpha}} \, dw \, dz\\
    &= -\iint_{\uni\times \uni} \I_{\{w\::\:|\Arg(w/z)| > \eps\}}(w) \, \frac{f(w) - f(z)}{|\Arg(w/z)|^{1 + \alpha}} \, dw \, dz,
\end{align*}
which proves \eqref{intL}.

By interchanging $z$ and $w$ we also get that
\begin{equation}\label{intL2}\begin{aligned}
    \sum_{n \in \Z \setminus \{0\}} &\int_\uni \int_{\uni} \frac{f(w) - f(z)}{|\Arg(w/z) + 2 n \pi|^{1 + \alpha}} \, dw \, dz \\
    &= \sum_{n \in \Z \setminus \{0\}} \int_\uni \int_{\uni} \frac{f(z) - f(w)}{|\Arg(z/w) + 2 n \pi|^{1 + \alpha}} \, dz \, dw.
\end{aligned}\end{equation}
Note that for $\Arg(w/z) \ne \pi$ we have $|\Arg(z/w) + 2 n \pi| = |\Arg(w/z) - 2 n \pi|$. Hence the expression in \eqref{intL2} equals $0$.

Set
$$
    L_{\eps}f(z) := \int_{\uni \cap \{|\Arg(w/z)| > \eps\}} \frac{f(w) - f(z)}{|\Arg(w/z)|^{1 + \alpha}} \, dw.
$$
What is left is to show that
\begin{equation}\label{intLeps}
    \int_\uni \lim_{\eps \to 0^+} L_{\eps}f(z) \, dz = \lim_{\eps \to 0^+} \int_\uni L_{\eps}f(z) \, dz.
\end{equation}
By the Taylor expansion we have for $f \in C^2(\uni)$
$$
    f(w) - f(z) = \Arg(w/z) f'(z) + \Arg^2(w/z) r(w,z), \quad w, \, z \in \uni,
$$
where $|r(w,z)| \le c(f)$.  Hence,
\begin{align*}
    |L_{\eps}f(z)|
    &= \left|\int_{\uni \cap \{|\Arg(w/z)| > \eps\}} r(w,z) \Arg^{1 - \alpha}(w/z) \, dw\right| \\
    &\le c(f) \int_\uni |\Arg^{1 - \alpha}(w/z)| \, dw = c(f,\alpha).
\end{align*}
Therefore, we get \eqref{intLeps} by the bounded convergence theorem.
\end{proof}

\begin{proposition}\label{stationary}
    Let
    $$
        \pi(dz,ds) = \frac{1}{2 \pi} e^{-\pi s^2} \, dz \, ds.
    $$
    Then for any $f \in C_c^2(\uni \times \R)$ we have
    $$
        \int_\uni \int_{\R} \calG f(z,s) \, \pi(dz,ds) = 0.
    $$
\end{proposition}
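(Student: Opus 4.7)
The plan is to use Corollary~\ref{genZS2} to split $\calG f = L_z f + R^\uni f + V(z) f_s$ and show that each piece contributes zero after integration against $\pi$, with the $R^\uni$-term and the drift term cancelling each other. Throughout, Fubini is applicable because $f\in C_c^2(\uni\times\R)$ and the integrands are either bounded with compact support in $s$ or covered by the a priori estimate $|R^\uni f(z,s)|\le C(1+|s|)$ which is integrable against $e^{-\pi s^2}$.

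First, for the jump-on-the-circle term, I would fix $s$ and apply Lemma~\ref{Lemma2.8BKS} to the $C^2(\uni)$ function $z\mapsto f(z,s)$ to conclude that $\int_\uni L_z f(z,s)\,dz = 0$. Multiplying by $(2\pi)^{-1}e^{-\pi s^2}$ and integrating in $s$ kills this contribution entirely.

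Next, for the drift term, integration by parts in $s$ (no boundary terms because $f$ has compact support) gives
\begin{equation*}
    \int_\R f_s(z,s)\, e^{-\pi s^2}\,ds = 2\pi\int_\R s f(z,s)\, e^{-\pi s^2}\,ds,
\end{equation*}
so after integration against $\pi$ the drift term equals $\int_\uni\int_\R s\,V(z) f(z,s) e^{-\pi s^2}\,ds\,dz$. The main (and only interesting) step is to show that the contribution from $R^\uni f$ is the negative of this, i.e.\ equals $-\int_\uni\int_\R s V(z) f(z,s) e^{-\pi s^2}\,ds\,dz$. I would do this by the standard symmetrisation trick for jump kernels: writing
\begin{equation*}
    I(s) := \int_\uni\!\int_\uni (f(w,s)-f(z,s))((V(z)-V(w))s)_+\,dw\,dz
\end{equation*}
and interchanging $z$ and $w$, one finds $I(s) = -\int_\uni\!\int_\uni (f(w,s)-f(z,s))((V(w)-V(z))s)_+\,dw\,dz$. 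Adding the two expressions and using the elementary identity $a_+ - (-a)_+ = a$ yields
\begin{equation*}
    2 I(s) = s\int_\uni\!\int_\uni (f(w,s)-f(z,s))(V(z)-V(w))\,dw\,dz.
\end{equation*}
Expanding the right-hand side produces four double integrals; two of them vanish because $\int_\uni V(z)\,dz = 0$, and the remaining two combine to give $-4\pi s\int_\uni V(z) f(z,s)\,dz$. Hence $I(s) = -2\pi s \int_\uni V(z) f(z,s)\,dz$, and dividing by $2\pi$ and integrating against $e^{-\pi s^2}\,ds$ shows the $R^\uni$-contribution equals $-\int_\uni\int_\R s V(z) f(z,s) e^{-\pi s^2}\,ds\,dz$.

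Summing the three contributions gives zero, which is exactly the claim. The only subtle point is a Fubini/absolute-integrability check in the symmetrisation step, since $((V(z)-V(w))s)_+$ is not itself antisymmetric in $(z,w)$; but because $f$ is compactly supported in $s$ and $V$ is bounded, the relevant double integrals are absolutely convergent on $\uni\times\uni\times[-M,M]$, so all the interchanges of integration are legitimate. The hypothesis $\int_\uni V\,dz = 0$, imposed at the very beginning of Section~\ref{sec2}, is used in a crucial way in the expansion that follows the symmetrisation; it is precisely this mean-zero condition that forces the Gaussian with density proportional to $e^{-\pi s^2}$ (rather than any other variance) to be stationary, since the coefficient $2\pi$ from integration by parts matches the coefficient $2\pi$ that drops out of $\int_\uni dw = 2\pi$ in the symmetrisation.
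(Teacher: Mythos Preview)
Your proof is correct and follows essentially the same route as the paper: split $\calG f$ via Corollary~\ref{genZS2}, kill the $L_z$-term with Lemma~\ref{Lemma2.8BKS}, integrate the drift term by parts, and symmetrise the $R^\uni$-integral in $(z,w)$ to reduce it to $-2\pi s\int_\uni V(z)f(z,s)\,dz$, which then cancels the drift contribution. The only cosmetic difference is that the paper writes $a_+=(a+|a|)/2$ and shows the $|a|$-part vanishes by symmetry, whereas you add the swapped integral and invoke $a_+-(-a)_+=a$; the two manipulations are algebraically equivalent.
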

\begin{proof}
Let $f \in C_c^2(\uni \times \R)$. By Corollary \ref{genZS2} we have
\begin{align*}
    &2 \pi \int_\uni \int_{\R} \calG f(z,s) \, \pi(dz,ds) \\
    &= \int_{\R} \int_\uni L_z f(z,s) \, dz\, e^{-\pi s^2} \, ds
        + \int_\uni V(z) \int_{\R} f_s(z,s) e^{-\pi s^2} \, ds \, dz \\
    &\qquad\mbox{} + \int_{\R} \int_\uni \int_\uni (f( w,s) - f(z,s)) ((V(z) - V( w))s)_{+} \, d w \, dz \, e^{-\pi s^2} \, ds \\
    &= \text{I} + \text{II} + \text{III}.
\end{align*}
 From  Lemma  \ref{Lemma2.8BKS} we know that $\text{I} = 0$.
Integrating by parts we obtain
$$
    \text{II} = 2 \pi \int_\uni \int_{\R} V(z) f(z,s) e^{-\pi s^2} s \, ds \, dz.
$$
Now we will simplify $\text{III}$. Note that $a_+ = (a + |a|)/2$, $a \in \R$. Hence
\begin{align*}
    \int_\uni \int_\uni &(f( w,s) - f(z,s)) ((V(z) - V( w))s)_{+} \, d w \, dz \\
    &= \frac{s}{2} \int_\uni \int_\uni (f( w,s) - f(z,s)) (V(z) - V( w)) \, d w \, dz \\
    &\qquad+ \frac{|s|}{2} \int_\uni \int_\uni (f( w,s) - f(z,s)) |V(z) - V( w)| \, d w \, dz\\
    &= \text{III}_1 + \text{III}_2.
\end{align*}
By interchanging $w$ and $z$ in $\text{III}_2$ we get
$$
    \text{III}_2
    =  \frac{|s|}{2} \int_\uni \int_\uni (f(z,s) - f( w,s)) |V( w) - V(z)| \, d w \, dz
    = - \text{III}_2,
$$
which means that $\text{III}_2 = 0$.

By assumption, $\int_\uni V(z) \, dz = 0$. Therefore
\begin{align*}
    \text{III}_1
    &= \frac{s}{2} \int_\uni f( w,s) \, d w \int_\uni V(z) \, dz - \frac{s}{2} \int_\uni f( w,s) V( w) \, d w \int_\uni \, dz \\
    &\qquad- \frac{s}{2} \int_\uni f(z,s) V(z) \, dz \int_\uni \, d w + \frac{s}{2} \int_\uni f(z,s) \, dz \int_\uni V( w) \, d w \\
    &= - 2 \pi s \int_\uni f(z,s) V(z) \, dz.
\end{align*}
Informally, $\text{III} = \int \left((\text{III}_1) e^{-\pi s^2}\right)ds$, so
$$
    \text{III} = - 2 \pi \int_\uni \int_{\R} V(z) f(z,s) e^{-\pi s^2} s \, ds \, dz.
$$
Consequently $\text{I} + \text{II} + \text{III} = 0$.
\end{proof}

\begin{theorem}\label{main2}
    The measure
    \begin{align}\label{5.6.1}
        \pi(dz,ds) = \frac{1}{2 \pi} e^{-\pi s^2} \, dz \, ds.
    \end{align}
is a stationary distribution of the process $(Z_t,S_t)$.
\end{theorem}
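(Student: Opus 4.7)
The plan is to combine Proposition \ref{stationary} (which identifies $\pi$ as a solution of the infinitesimal invariance equation on a core) with Proposition \ref{core3} (which ensures the infinitesimal calculation propagates to the full domain) and then argue by Kolmogorov's forward equation that $\pi$ is preserved by the semigroup $\{T_t^\uni\}_{t\geq 0}$.

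First, I would upgrade Proposition \ref{stationary} from the test class $C_c^2(\uni\times\R)$ to the full domain $\calD(\calG)$. By Proposition \ref{core3}, for every $f \in \calD(\calG)$ there exists a sequence $(f_n)_{n\geq 1} \subset C_c^2(\uni\times\R)$ with $\|f_n - f\|_\infty + \|\calG f_n - \calG f\|_\infty \to 0$. Since $\pi$ is a finite (in fact, probability) measure on $\uni\times\R$, bounded convergence yields
\begin{equation*}
    \int_{\uni\times\R} \calG f \, d\pi = \lim_{n\to\infty} \int_{\uni\times\R} \calG f_n \, d\pi = 0
\end{equation*}
for every $f \in \calD(\calG)$.

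Next, I would fix $f \in \calD(\calG)$ and consider $u(t) := \int T_t^\uni f \, d\pi$. By standard semigroup theory, $T_t^\uni f \in \calD(\calG)$ for every $t\geq 0$, the map $t \mapsto T_t^\uni f$ is norm-differentiable, and $\frac{d}{dt} T_t^\uni f = T_t^\uni \calG f = \calG T_t^\uni f$. Since $\|T_t^\uni \calG f\|_\infty \leq \|\calG f\|_\infty$, another application of bounded convergence lets me differentiate under the integral sign:
\begin{equation*}
    u'(t) = \int_{\uni\times\R} \calG T_t^\uni f \, d\pi = 0,
\end{equation*}
where the last equality uses the extended version of Proposition \ref{stationary} applied to $T_t^\uni f \in \calD(\calG)$. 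Hence $u(t) = u(0)$, i.e.\ $\int T_t^\uni f \, d\pi = \int f \, d\pi$ for every $f \in \calD(\calG)$ and every $t\geq 0$.

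Finally, I would extend this identity to all $f \in C_0(\uni\times\R)$. Since $C_c^2(\uni\times\R) \subset \calD(\calG)$ is dense in $C_0(\uni\times\R)$ in the uniform norm and $T_t^\uni$ is a contraction on $C_0(\uni\times\R)$, an $\eps$--$3\eps$ argument using the finiteness of $\pi$ gives $\int T_t^\uni f \, d\pi = \int f \, d\pi$ for all $f \in C_0(\uni\times\R)$ and all $t\geq 0$. This is exactly the statement that $\pi$ is stationary for $(Z_t,S_t)$. There is no real obstacle in this argument: all of the delicate work---identifying the generator (Corollary \ref{genZS2}), establishing the core (Proposition \ref{core3}), and verifying the infinitesimal invariance via Lemma \ref{Lemma2.8BKS} and the antisymmetry trick in the $R^\uni$-term (Proposition \ref{stationary})---has already been carried out earlier in the paper, and what remains is a routine passage from infinitesimal to finite-time invariance.
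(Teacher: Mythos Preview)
Your argument is correct, and it relies on the same two inputs as the paper's proof---Proposition \ref{core3} (core) and Proposition \ref{stationary} (infinitesimal invariance)---but the passage from infinitesimal to finite-time invariance is organized differently. The paper does not carry out the Kolmogorov forward-equation computation by hand; instead it recasts the situation in the martingale-problem framework of Ethier--Kurtz: it observes that any solution of \eqref{rep12} yields a solution of the martingale problem for $(\calG,\wt\mu)$ (\cite[Proposition 4.1.7]{EK}), invokes \cite[Theorem 4.4.1]{EK} to get well-posedness, and then applies the black-box result \cite[Proposition 4.9.2]{EK}, which says precisely that $\int \calG f\,d\pi = 0$ on a core implies stationarity of $\pi$. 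Your route is more self-contained and avoids the martingale-problem detour; the paper's route is shorter to write because the semigroup calculus (extending from the core to $\calD(\calG)$, differentiating $t\mapsto\int T_t^\uni f\,d\pi$, and the final density argument) is all absorbed into the citation of \cite[Proposition 4.9.2]{EK}. Both are valid, and in fact your argument is essentially what that proposition proves.
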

\begin{proof}
Let $(Y_t,S_t)$ be the Markov process given by \eqref{rep12} and let $(Z_t,S_t)$ be the Markov process where $Z_t = e^{i Y_t}$. By  $\{T_t^\uni\}_{t\ge 0}$ we denote the transition semigroup of $(Z_t,S_t)$ on the Banach space $C_0(\uni \times \R)$, cf.\ \eqref{TtD1}, and by $\calG$ we denote its generator.
Let $\calP(\R \times \R)$ and $\calP(\uni \times \R)$ denote the sets of all probability measures on $\R \times \R$ and $\uni  \times \R$ respectively. In this proof, for any $\tilde\mu \in \calP(\uni \times \R)$ we define $\mu \in \calP(\R \times \R)$ by $\mu([0,2\pi)\times \R) = 1$ and $\mu(A \times B) = \tilde\mu(e^{iA} \times B)$ for Borel sets $A \subset [0,2\pi)$, $B \subset \R$.

Consider any $\tilde\mu \in \calP(\uni \times \R)$ and the corresponding $\mu \in \calP(\R \times \R)$. For this $\mu$ there exists a Markov process $(Y_t,S_t)$ given by \eqref{rep12} such that $(Y_0,S_0)$ has the distribution $\mu$. It follows that for any $\wt\mu \in \calP(\uni \times \R)$ there exists a Markov process $(Z_t,S_t)$ with $Z_t = e^{i Y_t}$ and with initial distribution $\wt\mu$. By \cite[Proposition 4.1.7]{EK} $(Z_t,S_t)$ is a solution of the martingale problem for $(\calG,\wt\mu)$. By \cite[Theorem 4.4.1]{EK}
for any $\wt\mu \in \calP(\uni \times \R)$, uniqueness holds for the martingale problem for $(\calG,\calD(\calG),\wt\mu)$.  Hence the martingale problem for $\calG$ is well posed.

Proposition \ref{core3} gives that $C_c^2(\uni \times \R)$ is a core for $\calG$. By Proposition \ref{stationary} and \cite[Proposition 4.9.2]{EK} we get that $\pi$ is a stationary measure for $\calG$. This means that $\pi$ is a stationary distribution for $(Z_t,S_t)$.
\end{proof}

\begin{theorem}\label{uniq2}
The measure $\pi$ defined in \eqref{5.6.1} is the unique stationary distribution of the process $(Z_t,S_t)$.
\end{theorem}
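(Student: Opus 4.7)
The plan is to deduce uniqueness via a classical Doob-type theorem: it suffices to show that for some $t_0 > 0$ the transition kernel $P_{t_0}((z,s),\,\cdot\,)$ is \emph{strong Feller} (i.e.\ $T_{t_0}^\uni$ maps $B_b(\uni\times\R)$ into $C_b(\uni\times\R)$) and \emph{topologically irreducible}, meaning that $P_{t_0}((z,s), U) > 0$ for every $(z,s) \in \uni\times\R$ and every nonempty open $U \subset \uni\times\R$. Combined with the existence result in Theorem \ref{main2}, these two properties force any stationary distribution to be mutually equivalent to $\pi$, and an ergodic argument then identifies it with $\pi$.

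First I would establish topological irreducibility. For the $Z$-component, the driving $\alpha$-stable process $X$ has a transition density strictly positive on all of $\R$, so $Y_t$ can be steered with positive probability into any open subset of $\R$ in arbitrarily short time; projecting through $z=e^{iy}$ this gives reachability of any open arc of $\uni$. For the $S$-component, the identity $S_t = s + \int_0^t V(Z_r)\,dr$, together with the assumption that $V$ is a non-constant $C^3$ function with $\int_\uni V\,dz = 0$, implies that by varying the fraction of time $Z_r$ spends in each region of $\uni$ one can make the deterministic integral $\int_0^t V(Z_r)\,dr$ take values in an arbitrary open interval with positive probability. Combining these two mechanisms yields reachability of every nonempty open set in $\uni\times\R$.

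The second and harder step is the strong Feller property. Continuity in the $z$-variable is natural because of the regularising effect of the $\alpha$-stable jumps, whose transition densities are smooth. Continuity in the $s$-variable is subtler: the jump intensity $\|g(y,s,\,\cdot\,)\|_1$ and the normalised kernel $\ol g(y,s,\,\cdot\,)$ both depend on $s$ through the factor $((V(z)-V(w))s)_+$, so a perturbation of the initial value $s$ modifies the entire future dynamics rather than merely translating it. Here the estimate on the smoothness of the stochastic flow of solutions to \eqref{rep12} promised in the introduction becomes crucial: it controls how much the law of $(Z_t,S_t)$ varies under perturbations of the initial data, and together with the Lipschitz bounds supplied by Lemmas \ref{functiong}--\ref{norm} it should yield total variation continuity of the transition kernel, which is stronger than strong Feller.

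Finally, with strong Feller and topological irreducibility in hand, Doob's theorem applied to the Feller semigroup $T_t^\uni$ on the locally compact space $\uni\times\R$ shows that any two stationary probability measures are mutually equivalent, and the classical ergodic corollary then forces the set of invariant probabilities to be a singleton, which must be $\pi$ by Theorem \ref{main2}. The availability of the core $C_c^2(\uni\times\R)$ via Proposition \ref{core3} and the explicit generator formula from Corollary \ref{genZS2} can be exploited freely to translate between analytic and probabilistic statements. The principal obstacle is the continuity in the $s$-variable of the transition kernel, because $S$ enters the Markov dynamics only as a time-integrated functional of the $Z$-path, not as a component smoothed by noise in its own right; this is precisely the point where the smoothness of the stochastic flow does the real work.
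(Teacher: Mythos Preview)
Your plan rests on establishing the strong Feller property, and the justification you give for it is mistaken. Smoothness of the stochastic flow means that the map $(y,s)\mapsto (Y_t^{y,s},S_t^{y,s})$ is regular \emph{pathwise}; equivalently, $T_t^\uni$ preserves $C^2$. This is not the strong Feller property, which asserts that $T_t^\uni$ maps \emph{bounded measurable} functions to continuous ones. A deterministic ODE flow is $C^\infty$ yet its transition kernel is a Dirac mass. So the flow regularity referenced in the introduction (which the paper actually uses only to identify a core, via \cite[Theorem 3.1]{BKS} in Lemma~\ref{core1}) buys you nothing toward strong Feller. Since $S$ receives no noise of its own and enters only through $dS_t=V(Z_t)\,dt$, proving strong Feller would require a genuine hypoellipticity-type argument for a jump process with a degenerate direction; you have not supplied one, and it is not clear that the property even holds.

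The paper sidesteps strong Feller entirely. Its argument is: assume two stationary measures $\pi\neq\wh\pi$; construct a bounded stopping time $\tau$ (using the L\'evy--It\^o decomposition to split off one large jump of $X$, together with a controlled excursion of $S$ across an interval) such that the law of $(Z_\tau,S_\tau)$ under $\Pp^\pi$ has a component with density bounded below on a rectangle $\uni\times(-\eps,\eps)$, and similarly under $\Pp^{\wh\pi}$. Distinct stationary measures are mutually singular, so there is a set $A$ with $\pi(A)>0=\wh\pi(A)$; the ergodic theorem applied after the strong Markov property at $\tau$ and $\wh\tau$ then gives contradictory a.s.\ limits for the time averages, because the post-$\tau$ distributions have mutually absolutely continuous components. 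This requires only a \emph{common absolutely continuous component} of the two hitting distributions, which is much weaker than strong Feller and is obtained by a direct probabilistic construction rather than analytic regularity.
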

\begin{proof}
The proof is similar to the proof of \cite[Theorem 2.12]{BKS}.

\noindent \emph{Step 1}. Suppose that $(Y_t,S_t)$ satisfies
\begin{gather*}
    Y_t =  y + X_t , \qquad
    S_t = s + \int_0^t W(Y_r) \, dr, %\label{5.7.11}
\end{gather*}
where $X_0=0$. Suppose that $X_t$ is a stable L\'evy process with $X_0=0$. The following   L\'evy inequality for symmetric L\'evy processes  is well known
\begin{align*}
    \Pp\left(\sup_{0\leq r\leq\tau} |X_r|>\epsilon\right) \leq 2\,\Pp\left( |X_\tau| > \epsilon\right) \leq 1-\delta.
\end{align*}
It follows that for every $\tau< \infty$,  $y,s\in \R$ and  $\eps>0$ there exists $\delta>0$ such that
\begin{align}\label{5.7.1}
     \Pp^{y,s}\left(\sup_{0\leq r \leq \tau}|Y_r - y| \leq \eps \right)
    =
    \Pp\left(\sup_{0\leq r \leq \tau}|X_r | \leq \eps \right) \geq \delta.
\end{align}

\medskip \noindent \emph{Step 2}.
Recall that $V\in C^3$ and it is not identically constant. This and the fact that $\int_\uni V (z) \,dz =0$ imply that $W$ is strictly positive on some interval and  strictly negative on some other interval. We fix some $a_1, a_2\in(-\pi,\pi)$, $b_1>0$, $b_2<0$ and $\eps_0\in(0,\pi/100)$, such that $V(z)>b_1$ for $z\in \uni$, $\Arg(z) \in [a_1-4\eps_0, a_1 + 4\eps_0]$, and $V(z)<b_2$ for $z\in \uni$, $\Arg(z) \in [a_2-4\eps_0, a_2 + 4\eps_0]$.

Suppose that there exist two stationary probability distributions $\pi$ and $\wh\pi$ for $(Z,S)$. Let $((Z_t,
S_t))_{t\geq0}$ and $((\wh Z_t, \wh S_t))_{t\geq0}$ be processes with $(Z_0,S_0)$ and $(\wh Z_0, \wh S_0)$ distributed according to $\pi$ and $\wh\pi$, respectively. The transition probabilities for these processes are the same as for the processes which are solutions to  \eqref{rep12}. Recall that $X$ denotes the driving stable L\'evy process for $Z$ and $\tau_1$ is the time of the first ``extra jump'' in the representation \eqref{rep12}.

We will show that $S_t\ne0$ for some $t>0$, a.s. Suppose that the event $A = \{S_t = 0 \text{ for all } t\geq 0\}$ has strictly positive probability.  On $A$ we have $Y_t = X_t+y$ for all $t\geq 0$, according to \eqref{rep12}. Recall that $W(x) > 0$ for all $x$ in the set $\Gamma := \bigcup_{k \in \Z} (a_1-4\eps_0+2\pi k, a_1 + 4\eps_0+ 2 k \pi)$. It is easy to see that $X$ enters  $\Gamma-y$  at a finite time $s_0$, a.s. Hence, $Y$ enters $\Gamma$ at a finite time $s_0$, on the event $A$. Since $Y$ is right-continuous, $Y_t \in \Gamma$ for all $t\in (s_0,s_1)$ for some random $s_1>s_0$. This and \eqref{rep12} imply  that $S_t \ne 0$ for some $t\in (s_0,s_1)$, on the event $A$. This contradicts the definition of $A$ and hence it proves that $S_t\ne0$ for some $t>0$, a.s.

Assume without loss of generality that $S_t>0$ for some $t>0$, with positive probability. Then there exist $\eps_1>0$, $t_1>0$ and $p_1>0$ such that
$$
    \Pp^\pi(S_{t_1}>\eps_1, \tau_1>t_1) > p_1.
$$
Let $F_1 = \{S_{t_1}>\eps_1, \tau_1>t_1\}$ and $t_2 = \eps_1/(2\|W\|_\infty)$. Clearly, for some $p_2>0$ we have
$$
    \Pp^{\pi}\left(\exists\,t\in [t_1, t_1+t_2]\::\: \Arg(Z_t) \in [a_2-\eps_0, a_2 + \eps_0],\: \tau_1 > t_1+t_2 \:\big|\: F_1\right) > p_2.
$$
 Since $\Arg(Z_t)$ has right-continuous paths,  this implies that there exist $\eps_1>0$, $t_1>0$, $t_3 \in [t_1, t_1+t_2]$ and $p_3>0$ such that
\begin{align*}
    \Pp^\pi(S_{t_1}>\eps_1, \; \Arg(Z_{t_3}) \in [a_2-2\eps_0, a_2 + 2\eps_0], \tau_1>t_3) > p_3.
\end{align*}
Note that $|S_{t_3} - S_{t_1}| \leq \|W\|_\infty\, t_2 < \eps_1/2$. Hence,
\begin{align*}
    \Pp^\pi(S_{t_3}>\eps_1/2, \; \Arg(Z_{t_3}) \in [a_2-2\eps_0, a_2 + 2\eps_0], \tau_1>t_3) > p_3.
\end{align*}
Let $\eps_2 \in( \eps_1/2,\infty)$ be such that
\begin{align*}
    \Pp^\pi(S_{t_3}\in[\eps_1/2,\eps_2], \; \Arg(Z_{t_3}) \in [a_2-2\eps_0, a_2 + 2\eps_0], \tau_1>t_3) > p_3/2.
\end{align*}
Set $t_4 = 2\eps_2 /|b_2|$ and $t_5 = t_3 + t_4$. By \eqref{5.7.1}, for any $\eps_3>0$ and some $p_4>0$,
\begin{align*}
    \Pp^\pi\Big(&\sup_{t_3\leq r \leq t_5}|X_r - X_{t_3}| \leq \eps_3,\; S_{t_3}\in[\eps_1/2,\eps_2],\\
    &\Arg(Z_{t}) \in [a_2-3\eps_0, a_2 + 3\eps_0] \text{\ \  for all \ \ } t\in [t_3,  t_5],\: \tau_1>t_5\Big) > p_4.
\end{align*}
Since $V(x) < b_2 < 0$ for $ x \in [a_2-3\eps_0, a_2 + 3\eps_0]$, if the event in the last formula holds then
\begin{align*}
    S_{t_5} = S_{t_3} + \int _{t_3}^{t_5} V(Z_s) ds
    \leq \eps_2 + b_2 t_4 \leq -\eps_2.
\end{align*}
This implies that,
\begin{gather}\label{5.7.21}
    \Pp^\pi\Big(\sup_{t_3\leq r \leq t_5}|X_r - X_{t_3}| \leq \eps_3,\: S_{t_3} \geq \eps_1/2, S_{t_5} \leq - \eps_2, \tau_1>t_5\Big) > p_4.
\end{gather}

\medskip \noindent \emph{Step 3}. By the L\'evy-It\^o representation we can write the stable L\'evy process $X$ in the form $X_t =  J_t + \wt X_t$, where $ J$ is a compound Poisson process comprising all jumps of $X$ which are greater than $\eps_0$ and $\wt X = X- J$ is an independent L\'evy process (accounting for all small jumps of $X$).  Denote by $\lambda = \lambda(\alpha,\eps_0)$ the rate of the compound Poisson process $J$ and let $(\wt Y, \wt S)$ be the solution to \eqref{rep12}, with $X_t$ replaced by $\wt X_t$ for $t\geq t_3$.
 Similarly $\wt \tau_1$ denotes the first "extra jump" in  the representation \eqref{rep12} for the process $(\wt Y, \wt S)$.  Moreover, we take $\eps_3 < \eps_0/2$. By our construction $\sup_{t_3\leq r \leq t_5}|X_r - X_{t_3}| \le \eps_3$ entails that $\sup_{t_3\leq r \leq t_5}|J_r- J_{t_3}|=0$; therefore, \eqref{5.7.21} becomes
\begin{align*}%\label{6.13.1}
    &\Pp^\pi\Big(\sup_{t_3\leq r \leq t_5}|\wt X_r - \wt X_{t_3}| \leq \eps_3,\:  \wt S_{t_3} \geq \tfrac{\eps_1}2,\: \wt S_{t_5} \leq - \eps_2,\:  \wt \tau_1  >t_5\Big) \\
    &\geq \Pp^\pi\Big(\sup_{t_3\leq r \leq t_5}|\wt X_r - \wt X_{t_3}| \leq \eps_3,\: \sup_{t_3\leq r \leq t_5}| J_r - J_{t_3}|=0,\:  \wt S_{t_3} \geq \tfrac{\eps_1}2,\: \wt S_{t_5} \leq - \eps_2,\:  \wt \tau_1  >t_5\Big) \\
    &> p_4>0.
\end{align*}

Let $\tau$ be the time of the first jump of $ J$ in the interval $[t_3, t_5]$; we set $\tau=t_5$ if there is no such jump. We can represent $\{(Y_t,S_t), 0\leq t \leq \tau\}$ in the following way: $(Y_t,S_t) = (\wt Y_t, \wt S_t)$ for  $0\leq t < \tau$, $S_\tau = \wt S_\tau$, and $Y_\tau = \wt Y_{ \tau -} +  J_{\tau} -  J_{\tau-}$.  Note that  $\wt Y_t = y + \wt X_t$  if $t<\tau_1$.

We say that a non-negative measure $\mu_1$ is a component of a non-negative measure $\mu_2$ if $\mu_2 = \mu_1 + \mu_3$ for some non-negative measure $\mu_3$. Let $\mu(dz,ds) = \Pp^\pi(Z_\tau\in dz, S_\tau \in ds)$. We will argue that $\mu(dz, ds)$ has a component with a density bounded below by $c_2 >0$ on $\uni \times (-\eps_2, \eps_1/2)$.  We find  for every Borel set $A\subset\uni$ of arc length $|A|$ and every interval $(s_1,s_2) \subset (-\eps_2, \eps_1/2)$
\begin{align*}\small
    &\mu(A\times (s_1,s_2))\\
    &= \Pp^\pi\left(Z_\tau\in A,\: S_\tau\in (s_1,s_2)\right)\\
    &\geq \Pp^\pi\Big(Z_\tau\in A, S_\tau\in (s_1,s_2),  \sup_{t_3\leq r \leq t_5}|\wt X_r - \wt X_{t_3}| \leq \eps_3,\:  \wt S_{t_3} \geq \tfrac{\eps_1}2,\:  \wt S_{t_5} \leq - \eps_2,\: \wt \tau_1>t_5  \Big)\\
    &\geq \Pp^\pi\Big(e^{i(J_\tau- J_{\tau-})}\in e^{-i\wt X_{\tau-}} A,\: \wt S_\tau\in (s_1,s_2),\\
    &\qquad \sup_{t_3\leq r \leq t_5}|\wt X_r - \wt X_{t_3}| \leq \eps_3,\wt S_{t_3} \geq \eps_1/2,\: \wt S_{t_5} \leq - \eps_2,\:  \wt \tau_1  >t_5,\: N^J=1\Big).
\end{align*}
Here $N^J$ counts the number of jumps of the process $J$ occurring during the interval $[t_3,t_5]$. Without loss of generality we can assume that $\eps_0 < 2\pi$. In this case the density of the jump measure of $J$ is bounded below by $c_3>0$ on $(2\pi,4\pi)$. Observe that  the processes $(\wt X, \wt S)$ and $J$ are independent.  Conditional on $\{N^J=1\}$, $\tau$ is uniformly distributed on $[t_3,t_5]$, and the probability of the event $\{N^J = 1\}$ is $\lambda (t_5 - t_3) e^{-\lambda (t_5 - t_3)}$. Thus,
\begin{align*}
    &\mu(A\times (s_1,s_2))  \geq \\
    & c_3 |A| \Pp^\pi\Big(\wt S_\tau\in (s_1,s_2) \Big|\: \sup_{t_3\leq r \leq t_5}|\wt X_r - \wt X_{t_3}| \leq \eps_3, \wt S_{t_3} \geq \eps_1/2, \wt S_{t_5} \leq - \eps_2,  \wt \tau_1>t_5, N^J = 1\Big)\\
    &\qquad  \times   p_4\cdot \lambda (t_5 - t_3) e^{-\lambda (t_5 - t_3)}.
\end{align*}
Since the process $\wt S$ spends at least  $(s_2-s_1)/\|W\|_\infty$   units of time in $(s_1,s_2)$ we finally arrive at
$$
    \mu(A,  (s_1,s_2) ) \geq  p_4 \lambda e^{-\lambda (t_5 - t_3)}  c_3 |A|  (s_2-s_1)/\|W\|_\infty.
$$
This proves that $\mu(dz, ds)$ has a component with a density bounded below by $c_2=  p_4 \lambda e^{-\lambda (t_5 - t_3)}  c_3/\|W\|_\infty $ on $\uni \times (-\eps_2, \eps_1/2)$.

\medskip\noindent \emph{Step 4}. Let $ \eps_4 = \frac{\eps_1}2 \land \eps_2>0$. We
have shown that for some stopping time $\tau$, $ \Pp^\pi(Z_\tau\in
dz, S_\tau \in ds)$ has a component with a density bounded below by
$c_2>0$ on $\uni \times (-\eps_4, \eps_4)$. We can prove in an
analogous way that for some stopping time $\wh\tau$ and $\wh
\eps_4>0$, $ \Pp^{\wh\pi}(\wh Z_{\wh\tau}\in dz, \wh S_{\wh\tau} \in
ds)$ has a component with a density bounded below by $\wh c_2>0$ on
$\uni \times (-\wh\eps_4, \wh\eps_4)$.

Since $\pi \ne \wh\pi$, there exists a Borel set $A\subset \uni
\times \R$ such that $\pi(A) \ne \wh \pi(A)$. Moreover, since
any two stationary probability measures are either mutually
singular or identical,  cf.\ \cite[Chapter 2, Theorem 4]{S}, we have $\pi(A)>0$ and $\wh\pi(A) =0$
for some $A$. By the strong Markov property applied at $\tau$
and the ergodic theorem, see \cite[Chapter 1, page 12]{S}, we have $\Pp^\pi$-a.s.
\begin{align*}
    \lim_{t\to \infty} (1/t) \int_\tau^t \I_{ \{(Z_s, S_s) \in A\} } \,ds = \pi(A)>0.
\end{align*}
Similarly, we see that $\Pp^{\wh\pi}$-a.s.
\begin{align*}
    \lim_{t\to \infty} (1/t) \int_{\wh\tau}^t \I_{\{(\wh Z_s,\wh S_s) \in A\} } \, ds = \wh\pi(A)=0.
\end{align*}
Since the distributions of $( Z_{\tau},  S_{\tau})$ and $(\wh
Z_{\wh\tau}, \wh S_{\wh\tau})$ have mutually absolutely continuous
components,  the last two statements contradict each other. This
shows that we must have $\pi = \wh\pi$.
\end{proof}


\begin{thebibliography}{99}\frenchspacing
\bibliographystyle{plain}

\bibitem{BGK}
M.\ Barlow, A.\ Grigor'yan and T.\ Kumagai: Heat kernel upper bounds for jump processes and the first exit time. \emph{J.\ Reine Angew.\ Math.} \textbf{626} (2009), 135--157.

\bibitem{BBCH}
R.\ Bass, K.\ Burdzy, Z.\ Chen and M.\ Hairer: Stationary distributions for diffusions with inert drift. \emph{Probab. Theory Rel. Fields} \textbf{146} (2010), 1--47.

\bibitem{BKS} K. Burdzy, T. Kulczycki and R. Schilling: Stationary distributions for jump processes with inert drift. \emph{Preprint (2010)}.

\bibitem{BW2004}
K.\ Burdzy and D.\ White: A Gaussian oscillator. \emph{El. Comm. Probab.} \textbf{9} (2004), paper 10, pp. 92--95.

\bibitem{BW}
K.\ Burdzy and D.\ White: Markov processes with product-form stationary distribution. \emph{El.\ Comm.\ Probab.} \textbf{13}  (2008), 614--627.


\bibitem{EK}
S.N.\ Ethier and T.G.\ Kurtz: \emph{Markov Processes: Characterization and Convergence}. John Wiley \& Sons, New York 1986.


\bibitem{INW}
N.\ Ikeda, N.\ Nagasawa and S.\ Watanabe: A construction of Markov processes by piecing out. \emph{Proc.\ Japan Acad.} \textbf{42} (1966), 370--375.


\bibitem{M}
P.-A.\ Meyer: Renaissance, recollements, m\'elanges, ralentissement de processus de Markov. \emph{Ann.\ Inst.\ Fourier} \textbf{25} (1975), 464--497.


\bibitem{S}
Ya.\ G.\ Sinai: \emph{Topics in Ergodic Theory}. Princeton University Press, Princeton (NJ) 1994.
\end{thebibliography}
\end{document}